\newcommand{\supp}{\mbox{Supp}}
\newcommand{\cd}{\mathcal{D}}
\newcommand{\mcr}{\mathcal{R}}
\newcommand{\F}{\mathbb{F}}
  \def\\{}%
  \def\texttt#1{<#1>}%
\newtheorem{theorem}{Theorem}[section]
\newtheorem{proposition}[theorem]{Proposition}
\newtheorem{lemma}[theorem]{Lemma}
\newtheorem{corollary}[theorem]{Corollary}
\theoremstyle{definition}
\newtheorem{remark}[theorem]{Remark}
\newtheorem{definition}[theorem]{Definition}
\newtheorem{example}[theorem]{Example}
\numberwithin{equation}{section}
\begin{document}

\title{On homogeneous involutions on matrix algebras}

\author[Garcia]{Micael Said Garcia}
\address{Department of Mathematics,
Instituto de Matemática, Estatística e Ciência da Computação (IME) - Universidade de São Paulo (USP), São Paulo - Brazil}
\email{micael.said@usp.br}
\author[Sampaio]{Cassia Ferreira Sampaio}
\address{Faculdade de Tecnologia do Estado de São Paulo (FATEC), São Paulo - Brazil}
\email{cassia.sampaio@fatec.sp.gov.br}

\thanks{M. Garcia was financed in part by the Coordenação de Aperfeiçoamento de Pessoal de Nível Superior - Brasil (CAPES) - Finance Code 001.}

    \subjclass[2020]{16W50; 16W10}

    \keywords{Homogeneous involution; matrix algebras; graded algebras}

    \begin{abstract}
        We study the homogeneous involutions on the full square matrices over an algebraically closed field endowed with a division grading with commutative support. We obtain the classification of the isomorphism and equivalence classes for the Pauli grading. We also investigate the homogeneous involutions on the full square matrices with entries in a finite-dimensional graded-division algebra over an algebraically closed field of characteristic not $2$ endowed with an arbitrary grading by an arbitrary group.
    \end{abstract}
    \maketitle

    \section*{Introduction}
    Graded algebras appear naturally in several branches of Mathematics, often serving as a fundamental tool for understanding an object. Moreover, the study of graded algebras is also important by itself. In a given class of algebras, two important problems in this sense are the classification of the isomorphism classes of group gradings, and the classification of the equivalence classes of fine group gradings.

    In this direction, the gradings  on matrix algebras were described in \cite{BZ02}. In \cite{BSZ05}, the authors described the group gradings by a finite Abelian group on several types of simple Jordan and Lie algebras over an algebraically closed field of characteristic $0$. In their approach, the so-called \textit{graded involutions} (or \textit{degree-preserving involutions}) on matrix algebras played an essential role. Later, the classification of the isomorphism classes of graded involutions on matrix algebras graded by an abelian group and of the group gradings on classical simple Lie Algebras over an algebraically closed field of characteristic different from $2$ was given in \cite{BK10,EK15}. Also, the classification of the equivalence classes of fine group gradings on classical simple Lie Algebras over an algebraically closed field was first found in \cite{E10} assuming characteristic $0$ and later generalized in \cite{EK2013,EK15} for characteristic different from $2$.
    

    The same kind of problem can also be asked in the context of non-simple algebras. For example, the group gradings on upper triangular matrices were classified in \cite{VKV04, VZ07}. In \cite{VZ09}, their graded involutions over an algebraically closed field of characteristic zero were classified when the grading group is finite and abelian. Later, in \cite{DRG24}, the classification of the graded involutions on upper triangular matrices was given over an arbitrary field of characteristic not $2$ and an arbitrary group. In the last paper, the authors also studied their graded identities with involution for the finest grading.

    In other direction, \textit{degree-inverting involutions}, i.e., involutions that act on homogeneous components by inverting degrees appear naturally in various contexts. For example, the transposition on a matrix algebra with an elementary grading and the usual involution on Leavitt path algebras with its natural $\mathbb{Z}$-grading invert the degrees (see \cite{H2016}). Over a field of characteristic not $2$, the work \cite{FSY22} describes the degree-inverting involutions on upper triangular matrices and also on full square matrix algebra assuming further that the ground field is algebraically closed.


    A natural generalization of both graded and degree-inverting involutions is given by involutions that send homogeneous components to homogeneous components. This type of involution is called a \textit{homogeneous involution}, and it was first considered in \cite{M22}, where the author described it on upper triangular matrices over a field of characteristic different from $2$. Homogeneous involutions on finite-dimensional graded-division algebras over an algebraically closed field and their polynomial identities were also described in \cite{Y24}. Similarly, the star-homogeneous-graded identities on upper triangular matrices were also studied in \cite{MY25,DY25}.
    
    The purpose of this paper is to study the homogeneous involutions on matrix algebras over an algebraically closed field of characteristic not $2$. The paper is divided as follows: Section 1 is dedicated to some preliminaries. Then, we provide some partial results for homogeneous involutions on matrix algebras graded with a division grading by an abelian group in Section 2. Finally, in Section 3, we develop a similar theory as presented in \cite[Section 2.4]{EK2013} and \cite[Section 4]{FSY22} to study group gradings on matrix algebras with entries in a finite-dimensional graded-division algebra.    
    

    \section{Preliminaries}    
    \subsection{Graded algebras}
    Let $G$ be a group and $\mathcal{A}$ be an algebra. We use the multiplicative notation for $G$ and denote its neutral element by $1$. A $G$-grading $\Gamma$ on $\mathcal{A}$ is a vector space decomposition
    $$\Gamma\colon \mathcal{A}=\bigoplus_{g\in G}\mathcal{A}_{g}$$
    such that $\mathcal{A}_{g}\mathcal{A}_{h}\subseteq \mathcal{A}_{gh}$ for all $g, h \in G$. Each subspace $\mathcal{A}_{g}$ is called \emph{homogeneous component of degree $g$}. A nonzero element $x \in \mathcal{A}_{g}$ is called a \emph{homogeneous element of degree $g$}, and we denote $\deg x=g$. An algebra $\mathcal{A}$ endowed with a $G$-grading $\Gamma$ is called a \emph{$G$-graded algebra}. The \emph{support} of the grading $\Gamma$ is the set $\mathrm{Supp}\,\Gamma=\{g\in G\mid\, \mathcal{A}_{g}\neq 0\}$. When the grading is fixed, we shall denote the support by $\mathrm{Supp}\,\mathcal{A}$. A \emph{graded-division algebra} $\mathcal{D}$ is a unital associative algebra, where each nonzero homogeneous element is invertible.

    If $\mathcal{A}$ is a $G$-graded algebra and $\mathcal{B}$ is an $H$-graded algebra, an isomorphism of algebras $f\colon \mathcal{A}\to \mathcal{B}$ is called an \emph{equivalence} if there exists a bijection $\alpha\colon \supp\,\mathcal{A}\to \supp\,\mathcal{B}$ such that $f(\mathcal{A}_{g})=\mathcal{B}_{\alpha(g)}$ for all $g \in \supp\,\mathcal{A}$. When $G=H$ and $\alpha$ is the identity map, $f$ is called a \emph{graded isomorphism}. In this case, we say that $\mathcal{A}$ and $\mathcal{B}$ are isomorphic as $G$-graded algebras.

    Let $\mathcal{A}=\bigoplus_{g\in G}\mathcal{A}_{g}$ be a $G$-graded algebra. Then, a \emph{$G$-graded right $\mathcal{A}$-module} is a right $\mathcal{A}$-module $V$ endowed with a vector space decomposition $V=\bigoplus _{g\in G}V_{g}$ such that $V_{h}\mathcal{A}_{g}\subseteq V_{hg}$ for all $h,g \in G$. We say that $V$ is a \emph{graded-simple} right $\mathcal{A}$-module if $V\mathcal{A}\neq0$ and the only graded submodules of $V$ are $0$ and $V$. Given $V$ and $W$ two $G$-graded right $\mathcal{A}$-modules and $g \in G$, we say that $f\colon V\to W$ is a \emph{graded map of degree $g$} if $f$ is a homomorphism of $\mathcal{A}$-modules and $f(V_{h})\subseteq W_{gh}$ for all $h \in G$. We denote by $\mathrm{Hom}_{g}(V, W)$ the vector space of all graded maps of degree $g$. Then, we define $\mathrm{Hom}^{gr}(V,W)=\bigoplus _{g\in G}\mathrm{Hom}_{g}(V,W)$. If $V$ is finite-dimensional, then we have $\mathrm{Hom}_{\mathcal{A}}(V,W)=\mathrm{Hom}^{gr}(V,W)$. We can also define $G$-graded left $\mathcal{A}$-modules and repeat this discussion for them, making some adjustments (see \cite[Section 2.1]{EK2013} for further details).
    
    Now, let $\mathcal{D}$ be a finite-dimensional $G$-graded-division algebra, and let $V$ be a finite-dimensional $G$-graded right $\mathcal{D}$-module. Then, $\mathcal{R}=\mathrm{End}_{\mathcal{D}}(V)=\mathrm{Hom}_{\mathcal{D}}(V,V)$ is a $G$-graded algebra isomorphic to a matrix algebra with entries in $\cd$. Furthermore, $V$ is a $G$-graded left $\mathcal{R}$-module.

    Taking $\mcr=M_{n}(\F)$ a matrix algebra endowed with a $G$-grading, we can find a finite-dimensional graded-division algebra $\cd$ and a sequence $\gamma=(g_{1},\ldots,g_{m})\in G^{m}$ such that $\mcr\cong M_{m}(\F)\otimes \cd$, where the grading is given by
    $$\deg e_{ij}\otimes d=g_{i}(\deg d)g_{j}^{-1}\quad d\in \cd \text{ homogeneous}.$$
    Actually, given a graded-division algebra $\cd$, we can always endow $M_{m}(\F)\otimes \cd$ with a grading of this kind. We shall denote this kind of grading in $M_{m}(\F)\otimes \cd$ by $\mathcal{M}(\mathcal{D},\gamma)$.

    \subsection{Homogeneous anti-automorphisms}
    Let $\mathcal{A}$ be a $G$-graded algebra, and let $\tau\colon \mathrm{Supp}\,\mathcal{A}\to \mathrm{Supp}\,\mathcal{A}$ be a bijection. An algebra automorphism $\varphi\colon \mathcal{A}\to \mathcal{A}$ is called a \emph{$\tau$-homogeneous automorphism} if  $\varphi(\mathcal{A}_{g})=\mathcal{A}_{\tau(g)}$ for all $g \in \mathrm{Supp}\,\mathcal{A}$.  Similarly, an $\F$-linear anti-\linebreak automorphism $\psi\colon \mathcal{A}\to \mathcal{A}$ is called a \emph{$\tau$-homogeneous anti-automorphism} if $\psi(\mathcal{A}_{g})=\mathcal{A}_{\tau(g)}$ for all $g \in \mathrm{Supp}\,\mathcal{A}$. If $\psi$ is also an involution, then we say that $\psi$ is a \emph{$\tau$-homogeneous involution}. Notice that we can always extend (not uniquely) $\tau$ to be a bijection on $G$ (for example, sending $g$ to $g$ for all $g \notin \mathrm{Supp}\,\mathcal{A}$). We say that an automorphism or an anti-automorphism is \emph{homogeneous} if it is $\tau$-homogeneous for some bijection $\tau\colon G\to G$.

    \begin{lemma}
        \label{lemmaTau}
        Let $T$ be a group, $\tau\colon T\to T$ be a bijection, and $\cd=\bigoplus _{t\in T}\cd_{t}$ be a graded-division algebra with support $T$. If there exists a $\tau$-homogeneous anti-automorphism $\psi$ on $\cd$, then for all $g,h \in T$ we have
        $$\tau(gh)=\tau(h)\tau(g).$$
        If $\psi$ is also an involution, then
        \[
            \pushQED{\qed} 
            \tau(\tau(g))=g,\quad \forall g\in T.\qedhere
            \popQED
        \]     
    \end{lemma}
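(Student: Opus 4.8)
The plan is to exploit the fact that an anti-automorphism reverses products while preserving (up to $\tau$) the grading. First I would fix nonzero homogeneous elements: for $g,h\in T$, pick $x\in\cd_g$ and $y\in\cd_h$, both nonzero. Since $\cd$ is a graded-division algebra, the product $xy$ is a nonzero element of $\cd_{gh}$, hence homogeneous of degree $gh$. Applying $\psi$ and using that it is an anti-automorphism gives $\psi(xy)=\psi(y)\psi(x)$, where $\psi(x)\in\cd_{\tau(g)}$ and $\psi(y)\in\cd_{\tau(h)}$ are again nonzero homogeneous elements (as $\psi$ is bijective and $\psi(\cd_g)=\cd_{\tau(g)}$). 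Therefore $\psi(y)\psi(x)$ is a nonzero homogeneous element of degree $\tau(h)\tau(g)$. On the other hand, $\psi(xy)\in\cd_{\tau(gh)}$ is nonzero. Since a nonzero homogeneous element lies in a unique homogeneous component, we conclude $\tau(gh)=\tau(h)\tau(g)$, which is the first claim.

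For the second claim, assume $\psi$ is an involution, so $\psi^2=\mathrm{id}$. Given $g\in T$, take a nonzero $x\in\cd_g$. Then $\psi(x)\in\cd_{\tau(g)}$ is nonzero, so $\psi(\psi(x))\in\cd_{\tau(\tau(g))}$ is nonzero. But $\psi(\psi(x))=x\in\cd_g$, and again by uniqueness of the homogeneous component containing the nonzero element $x$, we get $\tau(\tau(g))=g$. This completes the proof.

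I do not anticipate a genuine obstacle here: the argument is essentially a bookkeeping exercise with homogeneous degrees, and the only ingredients needed are that the support equals all of $T$ (so every $\cd_g$ is nonzero and the relevant elements exist), that products of nonzero homogeneous elements in a graded-division algebra are nonzero, and that $\psi$ is a bijective anti-automorphism mapping $\cd_g$ onto $\cd_{\tau(g)}$. The one point worth stating carefully is that a nonzero homogeneous element determines its degree uniquely, which is immediate from the directness of the grading decomposition $\cd=\bigoplus_{t\in T}\cd_t$. If one wanted to avoid choosing elements altogether, an alternative is to note $\psi(\cd_g\cd_h)=\psi(\cd_h)\psi(\cd_g)$ as subspaces, combine with $\cd_g\cd_h\subseteq\cd_{gh}$ and the analogous inclusion to pin down degrees; but the element-chasing version is cleaner given that $\cd$ is graded-division.
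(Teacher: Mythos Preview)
Your proof is correct and is exactly the standard argument one would give; the paper itself states the lemma without proof (it simply places a \qed at the end of the statement), so there is nothing further to compare.
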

    
    Let $\mathcal{A}$ and $\mathcal{B}$ be two $G$-graded algebras. Let $\psi$ be a homogeneous anti-automorphism on $\mathcal{A}$ and $\psi'$ be a homogeneous anti-automorphism on $\mathcal{B}$. We say that the pairs $(\mathcal{A},\psi)$ and $(\mathcal{B},\psi')$ are \emph{equivalent} if there exists an equivalence $\varphi\colon \mathcal{A}\to\mathcal{B}$ such that $\varphi\circ \psi=\psi'\circ\varphi$. Analogously, we say that $(\mathcal{A},\psi)$ and $(\mathcal{B},\psi')$ are \emph{isomorphic} if there exists a graded isomorphism $\varphi\colon \mathcal{A}\to\mathcal{B}$ such that $\varphi\circ \psi=\psi'\circ\varphi$. In the case where $\mathcal{A}=\mathcal{B}$, we say that $\psi$ and $\psi'$ are equivalent (or isomorphic).  

    \subsection{Factor sets}
    Let $T$ be a finite group and $\F^{\times}$ be the set of invertible elements of $\F$. A map $\sigma\colon T\times T\to \F^{\times}$ is called a \emph{$2$-cocycle} or a \emph{factor set} if
    $$\sigma(u,v)\sigma(uv,w)=\sigma(u,vw)\sigma(v,w),\quad\forall u,v,w \in T.$$
    We denote by $Z^{2}(T,\F^{\times})$ the set of all factor sets $\sigma\colon T\times T\to \F^{\times}$. It is well-known that $Z^{2}(T,\F^{\times})$ with the natural product forms an abelian group.

    Given $\sigma\in Z^{2}(T,\F^{\times})$, we define the \emph{twisted group algebra $\F^{\sigma}T$ with respect to $\sigma$} being the algebra with basis $\{X_{t}\mid t\in T\}$ and product given by
    $$X_{u}X_{v}=\sigma(u,v) X_{uv},\quad u,v\in T.$$
    The fact that $\sigma \in Z^{2}(T,\F^{\times})$ implies that $\F^{\sigma}T$ is an associative algebra. Moreover, $\F^{\sigma}T$ has a natural grading that makes $\F^{\sigma}T$ a graded-division algebra.

    For any map $\lambda\colon T\to \F^{\times}$, we obtain a factor set $\delta\lambda$ given by
    $$\delta\lambda(u,v)=\frac{\lambda(u)\lambda(v)}{\lambda(uv)}.$$
    Since $\delta(\lambda_{1}\lambda_{2})=(\delta\lambda_{1})(\delta\lambda_{2})$, we have that $B^{2}(T,\F^{\times})=\{\delta\lambda\mid \lambda\colon T\to\F^{\times} \}$ is a subgroup of $Z^{2}(T,\F^{\times})$. We denote the quotient by $H^{2}(T,\F^{\times})=Z^{2}(T,\F^{\times})/B^{2}(T,\F^{\times})$ and call it the \emph{second cohomology group of $T$}. Given $\sigma \in Z^{2}(T,\F^{\times})$, we denote by $[\sigma]$ the class of $\sigma$ in $H^{2}(T,\F^{\times})$. It is known that $\F^{\sigma_{1}}T\cong\F^{\sigma_{2}}T$ if and only if $[\sigma_{1}]=[\sigma_{2}]$.
    
    \subsection{Graded-division algebras}
    Assume that $\F$ is algebraically closed and $\cd=\bigoplus _{g\in G}\cd_{g}$ is a finite-dimensional graded-division algebra. Let $T=\supp\, \cd$ be its support. Then, $T$ is a subgroup of $G$. Moreover, $\cd_{1}\supseteq \F$. Since $\F$ is algebraically closed and $\cd_{1}$ is finite-dimensional, we conclude that $\cd_{1}=\F$. This also implies that $\dim\cd_{t}=1$ for all $t \in T$. Let $\{X_{t}\mid t\in T\}$ be a basis of $\cd$ where each $X_{t}\in \cd_{t}$. Then, there exists a map $\sigma\colon T\times T\to \F$ such that $X_{u}X_{v}=\sigma(u,v)X_{uv}$ for all $u,v \in T$. Since $\cd$ is associative, we conclude that $\sigma$ has to be a $2$-cocycle. Hence, $\cd\cong \F^{\sigma}T$.

    A map $\beta\colon T\times T\to \F^{\times}$ is called an \emph{alternating bicharacter} if
    \begin{align*}
        \beta(uv,w)&=\beta(u,w)\beta(v,w),&\forall \, u,v,w\in T;\\
        \beta(u,vw)&=\beta(u,v)\beta(u,w),&\forall \, u,v,w\in T;\\
        \beta(u,u)&=1,&\forall \, u\in T.
    \end{align*}
    Moreover, $\beta$ is called \emph{nondegenerate} if $\beta(u,t)=1=\beta(t,u)$ for all $u \in T$ implies $t = 1$. Now, assume that $T$ is abelian. For any $\sigma \in Z^{2}(T,\F^{\times})$, we obtain an alternating bicharacter $\beta=\beta_{\sigma}$ given by
    $$\beta_{\sigma}(u,v)=\frac{\sigma(u,v)}{\sigma(v,u)}.$$
    One can prove that $\beta_{\sigma}$ only depends on the cohomology class $[\sigma]\in H^{2}(T,\F^{\times})$. Furthermore, $\mathcal{D}=\F^{\sigma}T$ is central if and only if $\beta=\beta_{\sigma}$ is nondegenerate. Finally, the pair $(T,\beta)$ uniquely determines an isomorphism class of finite-dimensional central graded-division algebras over $\F$ with commutative support (see \cite[Theorem 2.15]{EK2013}).

    Assume that $\cd=\F^{\sigma}T$ is a finite-dimensional central graded-division algebra over $\F$ with commutative support determined by the pair $(T,\beta)$. Then, we can decompose $T$ as a direct product of cyclic groups
    $$T=H_{1}'\times H_{1}''\times\cdots\times H_{r}'\times H_{r}''$$
    such that $H_{i}'\times H_{i}''$ and $H_{j}'\times H_{j}''$ are $\beta$-orthogonal for $i\neq j$, and $H_{i}'$ and $H_{i}''$ are in duality by $\beta$. Then, $H_{i}'\cong H_{i}''\cong \mathbb{Z}_{\ell_{i}}$ for some $\ell_{i}\in \mathbb{N}$ and $\ell_{1}\cdots\ell_{r}=\sqrt{|T|}$. Moreover, for each $i=1,\ldots,r$ we can find a generator $a_{i}$ of $H_{i}'$ and a generator $b_{i}$ of $H_{i}''$ such that $\beta(a_{i},b_{i})=\varepsilon_{i}$, where $\varepsilon_{i}$ is a primitive $\ell_{i}$-th root of unity. Then, we obtain an isomorphism between $\F^{\sigma}T$ and $M_{\ell_{1}}(\F)\otimes\cdots\otimes M_{\ell_{r}}(\F)\cong M_{n}(\F)$ defined by 
    $$X_{a_{i}}\mapsto I\otimes\cdots \otimes X_{i}\otimes\cdots\otimes I\quad\text{and}\quad X_{b_{i}}\mapsto I\otimes\cdots \otimes Y_{i}\otimes\cdots\otimes I,$$
    where $n=\sqrt{|T|}$,
    \begin{align*}
        X_{i}= 
        \begin{pmatrix} 
            \varepsilon_{i}^{\ell_{i}-1} & 0 & \dots & 0 & 0\\
            0 & \varepsilon_{i}^{\ell_{i}-2} & \dots & 0 & 0\\
            \vdots & \vdots & \ddots & \vdots & \vdots\\
            0 & 0 & \dots & \varepsilon_{i} & 0\\
            0 & 0 & \dots & 0 & 1
        \end{pmatrix}\quad\text{and}
        \quad Y_{i} =
        \begin{pmatrix} 
            0 & 1 & 0 & \dots & 0\\
            0 & 0 & 1 & \dots & 0\\
            \vdots & \vdots & \vdots & \ddots & \vdots\\
            0 & 0 & 0 & \dots & 1\\
            1 & 0 & 0 & \dots & 0
        \end{pmatrix}.
    \end{align*}
    This means that we can realize $\cd$ as a matrix algebra. Moreover, we obtain a representative $\sigma'$ of the cohomology class $[\sigma]$ that is multiplicative in each variable. Such a representative is given by
    \begin{equation}
        \label{sigmaRepresentant}
        \sigma'(a_{1}^{i_{1}}b_{1}^{j_{1}}\cdots a_{r}^{i_{r}}b_{r}^{j_{r}},a_{1}^{i_{1}'}b_{1}^{j_{1}'}\cdots a_{r}^{i_{r}'}b_{r}^{j_{r}'})=\varepsilon_{1}^{-j_{1}i_{1}'}\cdots \varepsilon_{r}^{-j_{r}i_{r}'}.
    \end{equation}

    \section{Homogeneous involutions on graded-division algebras}
    Let $\F$ be an algebraically closed field and $T=\prod_{i=1}^{n} (\mathbb{Z}_{\ell_{i}})^{2s_{i}}$ a product of cyclic groups where each $\ell_{i}$ is a power of a prime. Since $T$ is abelian, we shall use the additive notation for $T$ in this section. Let $\cd$ be a central finite-dimensional $T$-graded-division algebra. Then, we can realize $\cd$ as a matrix algebra $M_{r}(\F)$. Also, there exist $\sigma\in Z^{2}(T,\F^{\times})$ such that $\mathcal{D}=\F^{\sigma}T$. 
    Take $a_{ij},b_{ij}\in T$ such that each $a_{ij}$ and $b_{ij}$ has order $\ell_{i}$ and  $T=\prod_{i=1}^{n}\prod_{j=1}^{s_{i}}H_{ij}$ where $H_{ij}=\langle a_{ij},b_{ij}\rangle$. Then, $\{a_{ij}, b_{ij} \}$ generate $T$ and, without loss of generality, we may assume that $\sigma$ has the form \eqref{sigmaRepresentant} with $\varepsilon_{i}=\beta(a_{ij},b_{ij})=\beta(a_{ik},b_{ik})$ for all $(i, j, k)$. 
    Then, we can write $\cd = \F\langle X_{a_{ij}},X_{b_{ij}}\rangle/I$, where $I$ is the ideal of $\F\langle X_{a_{ij}},X_{b_{ij}}\rangle$ generated by the following elements
    \begin{enumerate}
        \item $X_{c_{ij}}^{\ell_{i}}-1$ with $c_{ij} \in \{a_{ij},b_{ij}\}$, $i \in \{1,\ldots, n\}$ and $j\in \{1,\ldots,s_{i}\}$;
        \item $X_{a_{ij}}X_{b_{k\ell}}-\varepsilon_{i}^{\delta_{ik}\delta_{j\ell}}X_{b_{k\ell}}X_{a_{ij}}$ with  $i,k \in \{1,\ldots, n\}$, $j\in \{1,\ldots,s_{i}\}$ and $\ell \in \{1,\ldots,s_{k}\}$;
        \item $X_{a_{ij}}X_{a_{k\ell}}-X_{a_{k\ell}}X_{a_{ij}}$ with $i,k \in \{1,\ldots, n\}$, $j\in \{1,\ldots,s_{i}\}$ and $\ell\in\{1,\ldots,s_{k}\}$;
        \item $X_{b_{ij}}X_{b_{k\ell}}-X_{b_{k\ell}}X_{b_{ij}}$ with $i,k \in \{1,\ldots, n\}$, $j\in \{1,\ldots,s_{i}\}$ and $\ell\in\{1,\ldots,s_{k}\}$.
    \end{enumerate}

    Let $\tau\colon T\to T$. By \Cref{lemmaTau}, if $\cd$ admits a $\tau$-homogeneous anti-automorphism, then $\tau$ must be an anti-automorphism of $T$. Since $T$ is abelian, then $\tau \in \operatorname{Aut}(T)$. Write $T=\prod_{j=1}^{k}T_{p_{j}}$, where each $p_{j}$ is a prime number and $T_{p_{j}}$ is a $p_{j}$-group. Then, $\operatorname{Aut}(T)\cong \prod_{j=1}^{k}\operatorname{Aut}(T_{p_{j}})$. Hence, it is enough to study the case where $T = \prod_{i=1}^{n}(\mathbb{Z}_{p^{i}})^{2s_{i}}$ is a $p$-group for a fixed prime number $p$. Let $\varepsilon=\varepsilon_{n}$. Then, we can assume that $\varepsilon_{i}=\varepsilon^{p^{n-i}}$ for all $i\in\{1,\ldots,n\}$.

    We know that any map $f\colon \{ X_{a_{ij}}, X_{b_{ij}}\mid 1\leq i \leq n,\, 1\leq j\leq s_{i} \}\to\cd$ can be extended to an algebra homomorphism 
    $$f_{1}\colon \F\langle X_{a_{ij}},X_{b_{ij}}\mid 1\leq i \leq n,\, 1\leq j\leq s_{i} \rangle\to \cd$$
    or an algebra anti-homomorphism
    $$f_{-1}\colon \F\langle X_{a_{ij}},X_{b_{ij}}\mid 1\leq i \leq n,\, 1\leq j\leq s_{i} \rangle\to \cd.$$
    Furthermore, $f_{s}$, $s\in\{-1,1\}$, factors through $\cd$ if, and only if
    \begin{align*}
        f_{s}(X_{a_{ij}})^{p^{i}}&=f_{s}(X_{b_{ij}})^{p^{i}}=1;\\
        f_{s}(X_{c})f_{s}(X_{d})&= \beta_{s}(c,d)f_{s}(X_{d})f_{s}(X_{c})
    \end{align*}
    with $\beta_{s}(c,d)=\beta(c,d)^{s}$ for all $c,d\in\{a_{11},b_{11},\ldots, a_{ns_{n}},b_{ns_{n}}\}$. 
    
    Let $\tau \in \operatorname{Aut}(T)$ and denote
    \begin{equation}
        \label{tauGenerators}
        \begin{split}
            \tau(a_{ij})&=\sum\limits_{k=1}^{n}\sum\limits_{\ell=1}^{s_{k}} (\mu_{k\ell}^{(a_{ij})}a_{k\ell} + \nu_{k\ell}^{(a_{ij})}b_{k\ell});\\
            \tau(b_{ij})&= \sum\limits_{k=1}^{n}\sum\limits_{\ell=1}^{s_{k}} (\mu_{k\ell}^{(b_{ij})}a_{k\ell} + \nu_{k\ell}^{(b_{ij})}b_{k\ell})
        \end{split}
    \end{equation}
    with each $\mu_{k\ell}^{(a_{ij})},\mu_{k\ell}^{(b_{ij})},\nu_{k\ell}^{(a_{ij})},\nu_{k\ell}^{(b_{ij})}\in \mathbb{Z}$.  For all $c,d\in\{a_{11},b_{11},\ldots, a_{ns_{n}},b_{ns_{n}}\}$, set
    \begin{align*}
        \tau_{k\ell}^{(c),(d)} &= 
        \begin{pmatrix} 
            \mu_{k\ell}^{(c)} & \mu_{k\ell}^{(d)}\\
            \nu_{k\ell}^{(c)} & \nu_{k\ell}^{(d)}
        \end{pmatrix}.
    \end{align*}
    Now, for a given $\tau\in\operatorname{Aut}(T)$, we fix matrices $\tau_{k\ell}^{(c),(d)}$ whose entries satisfy \eqref{tauGenerators} and define
    $$P_{c,d}=\sum_{k=1}^{n}\sum_{\ell=1}^{s_{k}}p^{n-k}\det\,\tau_{k\ell}^{(c),(d)}.$$

    Note that $P_{c,d}$ does not depends on the choice of $\tau_{k\ell}^{(c),(d)}$, modulo $p^{n}\mathbb{Z}$. In fact, if we have another matrix $\tilde{\tau}_{k\ell}^{(c),(d)}$ whose entries satisfy \eqref{tauGenerators}, then $\tau_{k\ell}^{(c),(d)}-\tilde{\tau}_{k\ell}^{(c),(d)}\in M_{2}(p^{k}\mathbb{Z})$. Then, in particular, $p^{n-k}(\det\,\tau_{k\ell}^{(c),(d)}-\det\,\tilde{\tau}_{k\ell}^{(c),(d)}) \in p^{n}\mathbb{Z}$ for all $c,d\in\{a_{11},b_{11},\ldots, a_{ns_{n}},b_{ns_{n}}\}$. 
    
    \begin{proposition}
        \label{propAntiautomorphismEquivalence}
        Let $\tau \in \operatorname{Aut}(T)$ and let $\lambda_{a_{ij}},\lambda_{b_{ij}}\in \F^{\times}$ for $i\in \{1,\ldots,n\},\,j\in\{1,\ldots,s_{i}\}$. Then,
        \begin{align*}
            \psi\colon\cd &\to \cd\\
            X_{a_{ij}}&\mapsto \lambda_{a_{ij}}X_{\tau(a_{ij})}\\
            X_{b_{ij}}&\mapsto \lambda_{b_{ij}}X_{\tau(b_{ij})}
        \end{align*}
        defines a $\tau$-homogeneous anti-automorphism on $\cd$ if and only if $\lambda_{a_{ij}}^{p^{i}}=\sigma(\tau(a_{ij}),\tau(a_{ij}))^{-\frac{{p^{i}(p^{i}-1)}}{2}}$, $\lambda_{b_{ij}}^{p^{i}}=\sigma(\tau(b_{ij}),\tau(b_{ij}))^{-\frac{{p^{i}(p^{i}-1)}}{2}}$ and  the following congruence modulo $p^{n}\mathbb{Z}$ holds
        \begin{equation}
            \label{equationPCongruentModpnZ}
            P_{c,d}\equiv\begin{cases}
                -p^{n-i},&\text{if $c=a_{ij}$ and $d=b_{ij}$}\\
                p^{n-i},&\text{if $c=b_{ij}$ and $d=a_{ij}$}\\
                0, &\text{otherwise}
            \end{cases}
        \end{equation}
        for all $c,d\in\{a_{11},b_{11},\ldots, a_{ns_{n}},b_{ns_{n}}\}$. Moreover, if $\psi$ is a homogeneous anti-automorphism, we have $\lambda_{a_{ij}}^{p^{i}}=\lambda_{b_{ij}}^{p^{i}}=1$ if $p \neq 2$ and
        $\lambda_{a_{ij}}^{p^{i}},\lambda_{b_{ij}}^{p^{i}}\in\{-1,1\}$ if $p=2$.
    \end{proposition}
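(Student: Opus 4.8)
The plan is to exploit the presentation $\cd=\F\langle X_{a_{ij}},X_{b_{ij}}\rangle/I$ together with the observation recorded just before the statement: the prescribed assignment of the generators extends to an anti-homomorphism $f_{-1}$ of the free algebra, and $\psi$ is well defined on $\cd$ (that is, $f_{-1}(I)=0$) precisely when $f_{-1}(X_{a_{ij}})^{p^{i}}=f_{-1}(X_{b_{ij}})^{p^{i}}=1$ and $f_{-1}(X_c)f_{-1}(X_d)=\beta(c,d)^{-1}f_{-1}(X_d)f_{-1}(X_c)$ for all $c,d\in\{a_{11},b_{11},\ldots,a_{ns_{n}},b_{ns_{n}}\}$. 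Once $\psi$ is well defined it is automatically bijective and $\tau$-homogeneous: writing a homogeneous $X_g$ as a scalar multiple of a product of the $X_{a_{k\ell}},X_{b_{k\ell}}$ and reversing this product shows that $\psi(X_g)$ is a nonzero scalar multiple of $X_{\tau(g)}$, and $\tau$ is a bijection of $T$. Hence the whole statement reduces to rewriting the two conditions above.

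For the power conditions, I would compute in $\F^{\sigma}T$ that $\psi(X_{a_{ij}})^{p^{i}}=\lambda_{a_{ij}}^{p^{i}}X_{\tau(a_{ij})}^{p^{i}}$ and use the identity $X_{g}^{k}=\sigma(g,g)^{k(k-1)/2}X_{kg}$, which holds because $\sigma$ is bimultiplicative in each variable by \eqref{sigmaRepresentant}. Since $\tau\in\operatorname{Aut}(T)$, the element $\tau(a_{ij})$ has order $p^{i}$, so $X_{\tau(a_{ij})}^{p^{i}}=\sigma(\tau(a_{ij}),\tau(a_{ij}))^{p^{i}(p^{i}-1)/2}$, and the first condition becomes exactly $\lambda_{a_{ij}}^{p^{i}}=\sigma(\tau(a_{ij}),\tau(a_{ij}))^{-p^{i}(p^{i}-1)/2}$ (and similarly for $b_{ij}$). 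For the commutation conditions, $\psi(X_c)\psi(X_d)=\lambda_c\lambda_d\,\sigma(\tau(c),\tau(d))\,X_{\tau(c)+\tau(d)}$, so the ratio of the two sides equals $\sigma(\tau(c),\tau(d))/\sigma(\tau(d),\tau(c))=\beta(\tau(c),\tau(d))$, and the condition reads $\beta(\tau(c),\tau(d))=\beta(c,d)^{-1}$.

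The crux is to identify $\beta(\tau(c),\tau(d))$ with $\varepsilon^{P_{c,d}}$. Expanding $\tau(c)$ and $\tau(d)$ via \eqref{tauGenerators} and using bimultiplicativity of $\beta$ together with $\beta(a_{k\ell},a_{k'\ell'})=\beta(b_{k\ell},b_{k'\ell'})=1$, $\beta(a_{k\ell},b_{k'\ell'})=\varepsilon_{k}^{\delta_{kk'}\delta_{\ell\ell'}}$ and the alternating identity $\beta(v,u)=\beta(u,v)^{-1}$, all cross-block contributions cancel and one is left with $\beta(\tau(c),\tau(d))=\prod_{k,\ell}\varepsilon_{k}^{\mu_{k\ell}^{(c)}\nu_{k\ell}^{(d)}-\nu_{k\ell}^{(c)}\mu_{k\ell}^{(d)}}=\prod_{k,\ell}\varepsilon_{k}^{\det\tau_{k\ell}^{(c),(d)}}$. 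Substituting $\varepsilon_{k}=\varepsilon^{p^{n-k}}$, where $\varepsilon$ is a primitive $p^{n}$-th root of unity, gives $\beta(\tau(c),\tau(d))=\varepsilon^{P_{c,d}}$. On the other hand $\beta(c,d)^{-1}$ equals $\varepsilon^{-p^{n-i}}$ if $c=a_{ij},d=b_{ij}$, equals $\varepsilon^{p^{n-i}}$ if $c=b_{ij},d=a_{ij}$, and equals $1$ otherwise; comparing exponents modulo $p^{n}$ (which is legitimate since $P_{c,d}$ is well defined modulo $p^{n}\mathbb{Z}$) yields precisely \eqref{equationPCongruentModpnZ}.

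For the last assertion I would first show that $\sigma(g,g)^{p^{i}}=1$ whenever $g\in T$ has order dividing $p^{i}$: by \eqref{sigmaRepresentant}, if $g=\sum_{k,\ell}(\mu_{k\ell}a_{k\ell}+\nu_{k\ell}b_{k\ell})$ then $\sigma(g,g)=\prod_{k,\ell}\varepsilon_{k}^{-\mu_{k\ell}\nu_{k\ell}}$, and the relation $p^{i}g=0$ forces $p^{\max(k-i,0)}$ to divide each $\mu_{k\ell}$ and $\nu_{k\ell}$, whence $p^{k}\mid p^{i}\mu_{k\ell}\nu_{k\ell}$ and the $(k,\ell)$-factor is trivial after the $p^{i}$-th power. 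Applying this to $g=\tau(a_{ij})$ (of order $p^{i}$) and combining with $\lambda_{a_{ij}}^{p^{i}}=\sigma(\tau(a_{ij}),\tau(a_{ij}))^{-p^{i}(p^{i}-1)/2}$: if $p\neq2$ then $p^{i}(p^{i}-1)/2$ is a multiple of $p^{i}$, so $\lambda_{a_{ij}}^{p^{i}}=1$; if $p=2$, write the exponent as $2^{i-1}(2^{i}-1)$ with $2^{i}-1$ odd, note that $\zeta:=\sigma(\tau(a_{ij}),\tau(a_{ij}))^{2^{i-1}}$ satisfies $\zeta^{2}=1$, hence $\zeta\in\{-1,1\}$ and $\lambda_{a_{ij}}^{2^{i}}=\zeta^{-(2^{i}-1)}=\zeta\in\{-1,1\}$; the argument for $\lambda_{b_{ij}}$ is identical. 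The step I expect to be the main obstacle is the bookkeeping in the third paragraph — checking that the cross-block terms really cancel and that the three cases of $\beta(c,d)^{-1}$ line up with the right-hand side of \eqref{equationPCongruentModpnZ}; everything else is routine computation in the twisted group algebra.
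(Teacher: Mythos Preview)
Your proof is correct and follows essentially the same route as the paper: reduce to the two families of relations in the presentation, translate the power relations into the condition on $\lambda_{c_{ij}}^{p^{i}}$ via $X_{g}^{k}=\sigma(g,g)^{k(k-1)/2}X_{kg}$, translate the commutation relations into $\beta(\tau(c),\tau(d))=\beta(c,d)^{-1}$, and expand $\beta(\tau(c),\tau(d))$ bilinearly to obtain $\varepsilon^{P_{c,d}}$. Your proof in fact goes slightly further than the paper's, which does not spell out the ``Moreover'' clause; your argument for $\sigma(g,g)^{p^{i}}=1$ when $p^{i}g=0$ and the ensuing case split $p\neq 2$ versus $p=2$ is a clean way to justify that final statement.
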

    \begin{proof}
        Let $c_{ij}\in \{a_{ij},b_{ij}\}$. Since
        \begin{align*}
            \psi(X_{c_{ij}})^{p^{i}}=\lambda_{c_{ij}}^{p^{i}}X_{\tau(c_{ij})}^{p^{i}}=\lambda_{c_{ij}}^{p^{i}}\sigma(\tau(c_{ij}),\tau(c_{ij}))^{\frac{{p^{i}(p^{i}-1)}}{2}}X_{0},
        \end{align*}
        we have that $\psi(X_{c_{ij}})^{p^{i}} = 1$ is equivalent to $\lambda_{c_{ij}}^{p^{i}}\sigma(\tau(c_{ij}),\tau(c_{ij}))^{\frac{{p^{i}(p^{i}-1)}}{2}}=1$.
        Moreover,
        \begin{align*}
            &\psi(X_{d})\psi(X_{c})-\beta(c,d)\psi(X_{c})\psi(X_{d})\\
            &=\lambda_{c}\lambda_{d}(X_{\tau(d)}X_{\tau(c)}-\beta(c,d) X_{\tau(c)}X_{\tau(d)})\\
            &=\lambda_{c}\lambda_{d}(\beta(\tau(d),\tau(c))-\beta(c,d))X_{\tau(c)}X_{\tau(d)}.
        \end{align*}
        Then, $\psi(X_{d})\psi(X_{c})=\beta(c,d)\psi(X_{c})\psi(X_{d})$ is equivalent to $\beta(c,d)=\beta(\tau(d),\tau(c))=\beta(\tau(c),\tau(d))^{-1}$.
        Now, calculating $\beta(\tau(c),\tau(d))$, we have
        \begin{align*}
            &\beta (\tau(c),\tau(d))=\\
            &=\beta \left(\sum\limits_{k=1}^{n}\sum\limits_{\ell=1}^{s_{k}} (\mu_{k\ell}^{(c)}a_{k\ell} + \nu_{k\ell}^{(c)}b_{k\ell}),\sum\limits_{k=1}^{n}\sum\limits_{\ell=1}^{s_{k}} (\mu_{k\ell}^{(d)}a_{k\ell} + \nu_{k\ell}^{(d)}b_{k\ell})\right)\\
            &=\prod\limits_{k=1}^{n}\prod\limits_{\ell=1}^{s_{k}}\beta(a_{k\ell},b_{k\ell})^{\mu_{k\ell}^{(c)}\nu_{k\ell}^{(d)}}\beta(b_{k\ell},a_{k\ell})^{\nu_{k\ell}^{(c)}\mu_{k\ell}^{(d)}}\\
            &=\prod\limits_{k=1}^{n}\prod\limits_{\ell=1}^{s_{k}}\varepsilon_{k}^{\mu_{k\ell}^{(c)}\nu_{k\ell}^{(d)}}\varepsilon_{k}^{-\nu_{k\ell}^{(c)}\mu_{k\ell}^{(d)}}=\prod\limits_{k=1}^{n}\prod\limits_{\ell=1}^{s_{k}}\varepsilon_{k}^{\mu_{k\ell}^{(c)}\nu_{k\ell}^{(d)}-\nu_{k\ell}^{(c)}\mu_{k\ell}^{(d)}}\\
            &=\prod\limits_{k=1}^{n}\prod\limits_{\ell=1}^{s_{k}}\varepsilon_{k}^{\det(\tau_{k\ell}^{(c),(d)})}=\prod\limits_{k=1}^{n}\prod\limits_{\ell=1}^{s_{k}}\varepsilon^{p^{n-k}\det(\tau_{k\ell}^{(c),(d)})}=\varepsilon^{P_{c,d}}.
        \end{align*}
        Since
        \begin{align*}
            \beta(c,d)=\begin{cases}
                \varepsilon^{p^{n-i}},&\text{ if $c=a_{ij}$ and $d=b_{ij}$}\\
                \varepsilon^{-p^{n-i}},&\text{ if $c=b_{ij}$ and $d=a_{ij}$}\\
                1, &\text{ otherwise}
            \end{cases}
        \end{align*}
        and $\varepsilon$ is a primitive $p^{n}$-th root of unity, we have that $\psi(X_{d})\psi(X_{c})=\beta(c,d)\psi(X_{c})\psi(X_{d})$ is equivalent to \eqref{equationPCongruentModpnZ}.
    \end{proof}
    
    \begin{example}
        Notice that in the previous proposition it can occur $\lambda_{a_{ij}}^{p^{i}}\neq\lambda_{b_{ij}}^{p^{i}}$. In fact, take $T=\mathbb{Z}_{2}^{2}=\langle a,b\rangle$. Then, the map $\tau$ that sends $a$ to $a+b$ and $b$ to $b$ is an automorphism of $T$. Notice that $\sigma(\tau(a),\tau(a))=-1$ and $\sigma(\tau(b),\tau(b))=1$. Then, the map that sends $X_{a}$ to $iX_{a+b}$ and $X_{b}$ to $X_{b}$ where $i^{2}=-1$ defines a $\tau$-homogeneous anti-automorphism on $\cd$.
    \end{example}
    
    \begin{remark}
        \label{remarkAutomorphismEquivalence}
        By replacing the condition \eqref{equationPCongruentModpnZ} in \Cref{propAntiautomorphismEquivalence} with
        $$
            P_{c,d}\equiv
            \begin{cases}
                p^{n-i},&\text{if $c=a_{ij}$ and $d=b_{ij}$}\\
                -p^{n-i},&\text{if $c=b_{ij}$ and $d=a_{ij}$}\\
                0, &\text{otherwise}    
            \end{cases}
        $$
        we obtain the description of $\tau$-homogeneous automorphisms.
    \end{remark}
    
    We shall denote the map defined in \Cref{propAntiautomorphismEquivalence} by $(\tau,\lambda_{a_{ij}},\lambda_{b_{ij}})$. 
    
    \begin{remark}
        \label{remtg}
        Take $g = \sum\limits_{i=1}^{n}\sum\limits_{j=1}^{s_{i}}(\alpha_{ij}a_{ij}+\beta_{ij}b_{ij})\in T$. If we write
        $$\psi(X_{g})=\lambda_{g}X_{\tau(g)},$$ 
        then using \eqref{sigmaRepresentant} and \eqref{tauGenerators} we can find $t_{g}\in\{0,\ldots,p^{n}-1\}$ such that 
        $$\lambda_{g} = \varepsilon^{t_{g}}\prod\limits_{i=1}^{n}\prod\limits_{j=1}^{s_{i}}\lambda_{a_{ij}}^{\alpha_{ij}}\lambda_{b_{ij}}^{\beta_{ij}}.$$ 
    \end{remark}
    
    \begin{remark}
        \label{remarkMoreGenralCase}
        Consider the general case where $T=\prod_{i=1}^{n} T_{i}$ is a product of cyclic groups where each $T_{i}$ is a $p_{i}$-group, $p_{1},\ldots,p_{n}$ being distinct prime numbers. Given $\tau\in \operatorname{Aut}(T)$ and $\psi\colon\mathbb{F}^{\sigma}T\to\mathbb{F}^{\sigma}T$ an anti-automorphism we have that $\psi$ is $\tau$-homogeneous if and only if $\psi_{i}$ is $\tau_{i}$-homogeneous for all $i\in\{1,\ldots,n\}$, where $\psi_{i}$ is the restriction of $\psi$ to $\F^{\sigma}T_{i}$ and $\tau_{i}$ is the restriction of $\tau$ to $T_{i}$. 
    \end{remark}
    
    Now, if we consider graded or degree-inverting anti-automorphisms, we have that each $\tau_{k\ell}^{(a_{ij},b_{ij})}=\pm \delta_{ik}\delta_{j\ell} I_{2}$. Therefore, $\det\,\tau_{k\ell}^{(ij)}=\delta_{ik}\delta_{j\ell}$ and then
    $$-p^{n-i}\equiv\sum\limits_{k=1}^{n}\sum_{\ell=1}^{s_{k}}p^{n-k}\det\,\tau_{k\ell}^{(a_{ij},b_{ij})}\equiv p^{n-i}\quad(\text{mod }p^{n}\mathbb{Z})$$
    for all $i=\{1,\ldots,n\}$. It is only possible if $p=2$ and $n=1$. Therefore, we recover the following known result.
    
    \begin{corollary}[{\cite[Lemma 3.8]{FSY22} and \cite[Lemma 2.50]{EK2013}}]
        Let $\mathcal{D}$ be a central finite-dimensional graded-division algebra over an algebraically closed field. If $\cd$ admits a degree-preserving or degree-inverting anti-automorphism and $T=\supp\,\cd$ is abelian, then $T$ is an elementary $2$-group.\qed
    \end{corollary}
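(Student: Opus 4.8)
The statement is a direct corollary of \Cref{propAntiautomorphismEquivalence} together with the discussion preceding it, so the plan is simply to record that deduction in a clean way. First I would reduce to the case of a $p$-group. Writing $T=\prod_{j}T_{p_{j}}$ with the $p_{j}$ distinct primes as in \Cref{remarkMoreGenralCase}, a degree-preserving (respectively, degree-inverting) anti-automorphism $\psi$ of $\cd=\F^{\sigma}T$ restricts to a degree-preserving (respectively, degree-inverting) anti-automorphism $\psi_{j}$ of each $\F^{\sigma}T_{p_{j}}$; hence it suffices to prove that each $T_{p_{j}}$ is an elementary $2$-group, and we may assume $T=\prod_{i=1}^{n}(\mathbb{Z}_{p^{i}})^{2s_{i}}$ is a $p$-group realized as $\F^{\sigma}T$ with $\sigma$ normalized as in \eqref{sigmaRepresentant}.

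Next I would pin down the bijection $\tau$ and the shape of $\psi$. A degree-preserving anti-automorphism is $\mathrm{id}_{T}$-homogeneous and a degree-inverting one is $\tau$-homogeneous for $\tau(t)=t^{-1}$; since $T$ is abelian, in both cases $\tau=\pm\mathrm{id}$, which lies in $\operatorname{Aut}(T)$, consistently with \Cref{lemmaTau}. Because $\dim\cd_{t}=1$ for every $t\in T$ and $\{X_{a_{ij}},X_{b_{ij}}\}$ generates $\cd$, the anti-automorphism $\psi$ must have the form $(\tau,\lambda_{a_{ij}},\lambda_{b_{ij}})$ of \Cref{propAntiautomorphismEquivalence} for suitable scalars $\lambda_{a_{ij}},\lambda_{b_{ij}}\in\F^{\times}$. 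For $\tau=\pm\mathrm{id}$ the integer matrices in \eqref{tauGenerators} may be taken to be $\tau_{k\ell}^{(a_{ij}),(b_{ij})}=\pm\delta_{ik}\delta_{j\ell}\,I_{2}$, so that $\det\tau_{k\ell}^{(a_{ij}),(b_{ij})}=\delta_{ik}\delta_{j\ell}$ and therefore $P_{a_{ij},b_{ij}}=\sum_{k,\ell}p^{n-k}\delta_{ik}\delta_{j\ell}=p^{n-i}$.

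Then I would feed this into the congruence \eqref{equationPCongruentModpnZ}. Since $\psi$ is an anti-automorphism, \Cref{propAntiautomorphismEquivalence} forces $P_{a_{ij},b_{ij}}\equiv -p^{n-i}\pmod{p^{n}\mathbb{Z}}$, so combining with the previous paragraph we get $p^{n-i}\equiv -p^{n-i}\pmod{p^{n}}$, i.e.\ $p^{n}\mid 2p^{n-i}$, i.e.\ $p^{i}\mid 2$. As $i\geq 1$, this is possible only if $p=2$ and $i=1$; hence $n=1$ and $T=(\mathbb{Z}_{2})^{2s_{1}}$ is an elementary $2$-group. Back in the general setting this says that every $T_{p_{j}}$ is an elementary $2$-group, so only the prime $2$ occurs and $T$ itself is an elementary abelian $2$-group.

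The routine parts above are the determinant bookkeeping; the only point that genuinely uses the hypotheses is being allowed to invoke \Cref{propAntiautomorphismEquivalence} at all. This requires the centrality of $\cd$, which guarantees (through nondegeneracy of $\beta_{\sigma}$) the $\beta$-orthogonal decomposition $T=\prod_{i,j}\langle a_{ij},b_{ij}\rangle$ and the multiplicative representative \eqref{sigmaRepresentant}, and it requires \Cref{lemmaTau} together with commutativity of $T$ to conclude that $\tau$ is an automorphism of $T$; everything else is the computation just sketched.
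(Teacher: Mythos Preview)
Your proof is correct and follows essentially the same route as the paper: the paper also reduces to the $p$-group case, observes that for $\tau=\pm\mathrm{id}$ the matrices $\tau_{k\ell}^{(a_{ij}),(b_{ij})}$ equal $\pm\delta_{ik}\delta_{j\ell}I_{2}$ with determinant $\delta_{ik}\delta_{j\ell}$, and then reads off from \eqref{equationPCongruentModpnZ} the congruence $p^{n-i}\equiv-p^{n-i}\pmod{p^{n}}$, forcing $p=2$ and $n=1$. Your write-up simply makes explicit the reduction to $p$-groups and the fact that $\psi$ necessarily has the form $(\tau,\lambda_{a_{ij}},\lambda_{b_{ij}})$, points the paper leaves implicit in the surrounding discussion.
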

    
    If we deal with homogeneous involutions, we have the following straightforward proposition. 
    
    \begin{proposition}
        \label{propInvolution}
        Let $(\tau,\lambda_{a_{ij}},\lambda_{b_{ij}})$ be a homogeneous anti-automorphism. Then, it is an involution if and only if $\tau^{2}=1$ and $\lambda_{a_{ij}}\lambda_{\tau(a_{ij})}=\lambda_{b_{ij}}\lambda_{\tau(b_{ij})}=1$ for all $i\in\{1,\ldots,n\}$, $j\in \{1,\ldots, s_{i}\}$.\qed
    \end{proposition}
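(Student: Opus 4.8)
The plan is to use that $\psi^{2}$ is an algebra \emph{automorphism} of $\cd$ (being the composition of two anti-automorphisms), so that $\psi^{2}$ is completely determined by its values on the algebra generators $X_{a_{ij}}$ and $X_{b_{ij}}$. Thus the whole statement reduces to computing $\psi^{2}$ on these generators. First I would compute $\psi^{2}(X_{a_{ij}})$: by the definition of $\psi=(\tau,\lambda_{a_{ij}},\lambda_{b_{ij}})$ we have $\psi(X_{a_{ij}})=\lambda_{a_{ij}}X_{\tau(a_{ij})}$, and then applying $\psi$ once more and invoking \Cref{remtg} with $g=\tau(a_{ij})$ (which gives $\psi(X_{\tau(a_{ij})})=\lambda_{\tau(a_{ij})}X_{\tau(\tau(a_{ij}))}$), we get
\[
    \psi^{2}(X_{a_{ij}})=\lambda_{a_{ij}}\lambda_{\tau(a_{ij})}\,X_{\tau^{2}(a_{ij})},
\]
and in exactly the same way $\psi^{2}(X_{b_{ij}})=\lambda_{b_{ij}}\lambda_{\tau(b_{ij})}\,X_{\tau^{2}(b_{ij})}$.

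For the ``if'' direction, assuming $\tau^{2}=1$ and $\lambda_{a_{ij}}\lambda_{\tau(a_{ij})}=\lambda_{b_{ij}}\lambda_{\tau(b_{ij})}=1$ for all admissible $i,j$, the two displayed identities specialize to $\psi^{2}(X_{a_{ij}})=X_{a_{ij}}$ and $\psi^{2}(X_{b_{ij}})=X_{b_{ij}}$. Since $\psi^{2}$ is an algebra automorphism fixing a generating set of $\cd$, it must be the identity, so $\psi$ is an involution.

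For the ``only if'' direction, suppose $\psi^{2}=\mathrm{id}$. Evaluating at $X_{a_{ij}}$ gives $\lambda_{a_{ij}}\lambda_{\tau(a_{ij})}\,X_{\tau^{2}(a_{ij})}=X_{a_{ij}}$. Comparing homogeneous degrees here is legitimate because $\dim\cd_{t}=1$ for every $t\in T$ and distinct elements of $T$ index distinct components; hence $\tau^{2}(a_{ij})=a_{ij}$ and consequently $\lambda_{a_{ij}}\lambda_{\tau(a_{ij})}=1$. The identical argument applied to $X_{b_{ij}}$ yields $\tau^{2}(b_{ij})=b_{ij}$ and $\lambda_{b_{ij}}\lambda_{\tau(b_{ij})}=1$. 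Finally, since $\{a_{ij},b_{ij}\}$ generates $T$ and $\tau\in\operatorname{Aut}(T)$, the fact that $\tau^{2}$ fixes every generator forces $\tau^{2}=1$ on all of $T$.

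The argument is essentially a bookkeeping exercise and I do not expect a genuine obstacle; the one point that needs a little care is that in computing $\psi^{2}(X_{a_{ij}})$ the second application of $\psi$ must be evaluated through \Cref{remtg} (which describes $\psi(X_{g})$ for arbitrary $g\in T$), rather than through the defining data of $\psi$, which only prescribes the images of the chosen generators $X_{a_{ij}},X_{b_{ij}}$. Once this is in place, matching homogeneous degrees in $\psi^{2}(X_{a_{ij}})=X_{a_{ij}}$ is immediate, and everything else follows formally.
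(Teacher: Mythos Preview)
Your proof is correct and is precisely the straightforward verification the paper has in mind; indeed, the paper omits the proof entirely (the proposition ends with \qed and is introduced as ``straightforward''). Your only subtlety—interpreting $\lambda_{\tau(a_{ij})}$ via \Cref{remtg} as the scalar in $\psi(X_{\tau(a_{ij})})=\lambda_{\tau(a_{ij})}X_{\tau^{2}(a_{ij})}$—is exactly the intended reading, and with that in place the argument is immediate.
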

    
    Now we shall compare different homogeneous anti-automorphisms.
    
    \begin{proposition}
        \label{propositionEquivalenceAntiautomorphisms}
        Let $\psi = (\tau,\lambda_{a_{ij}},\lambda_{b_{ij}})$ and $\psi' = (\tau',\lambda_{a_{ij}}',\lambda_{b_{ij}}')$ be two homogeneous anti-automorphisms. Then, $\psi$ and $\psi'$ are equivalent if and only if there exist $\phi\in\operatorname{Aut}(T)$ and a map $\chi\colon G\to \F^{\times}$ such that $\frac{\chi(g+h)}{\chi(g)\chi(h)}=\frac{\sigma(\phi(g),\phi(h))}{\sigma(g,h)}$ for all $g,h\in T$, $\tau'=\phi\tau\phi^{-1}$ and        $$\lambda_{c}=\chi(c)\chi(\tau(c))^{-1}\lambda_{\phi(c)}'$$
        for all $c\in\{a_{11},b_{11},\ldots, a_{ns_{n}},b_{ns_{n}}\}$. In this case, the equivalence is given by $\varphi\colon X_{g}\in\mathcal{D}\mapsto\chi(g)X_{\phi(g)}\in\mathcal{D}$.
    \end{proposition}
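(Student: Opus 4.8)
The statement is an ``if and only if'', and in both directions the plan rests on one observation: since $\dim\cd_{g}=1$ for every $g\in T=\supp\,\cd$, any equivalence $\varphi\colon\cd\to\cd$ is completely determined by a bijection $\phi\colon T\to T$ with $\varphi(\cd_{g})=\cd_{\phi(g)}$ together with scalars $\chi(g)\in\F^{\times}$ such that $\varphi(X_{g})=\chi(g)X_{\phi(g)}$; conversely, any such pair $(\phi,\chi)$ produces a candidate equivalence. So I would translate the two requirements ``$\varphi$ is an algebra map'' and ``$\varphi\circ\psi=\psi'\circ\varphi$'' into conditions on $\phi$ and $\chi$, checking everything on the algebra generators $X_{a_{ij}},X_{b_{ij}}$.

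\emph{From an equivalence to $(\phi,\chi)$.} Assuming $\psi$ and $\psi'$ are equivalent via $\varphi$, I would write $\varphi(X_{g})=\chi(g)X_{\phi(g)}$ as above and apply $\varphi$ to $X_{g}X_{h}=\sigma(g,h)X_{g+h}$. Comparing degrees gives $\phi(g+h)=\phi(g)+\phi(h)$, so $\phi\in\operatorname{Aut}(T)$; comparing coefficients gives $\sigma(g,h)\chi(g+h)=\sigma(\phi(g),\phi(h))\chi(g)\chi(h)$, which is exactly the asserted relation $\chi(g+h)/(\chi(g)\chi(h))=\sigma(\phi(g),\phi(h))/\sigma(g,h)$ for $g,h\in T$ (one extends $\chi$ to $G$ arbitrarily; only its restriction to $T$ matters). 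Next I would evaluate $\varphi\circ\psi=\psi'\circ\varphi$ on a generator $X_{c}$, $c\in\{a_{ij},b_{ij}\}$: the left-hand side is $\lambda_{c}\chi(\tau(c))X_{\phi(\tau(c))}$, while by \Cref{remtg} applied to $\psi'$ the right-hand side is $\chi(c)\lambda'_{\phi(c)}X_{\tau'(\phi(c))}$, where $\lambda'_{\phi(c)}$ is the scalar with $\psi'(X_{\phi(c)})=\lambda'_{\phi(c)}X_{\tau'(\phi(c))}$. Matching degrees over all generators yields $\phi\tau=\tau'\phi$, hence $\tau'=\phi\tau\phi^{-1}$; matching coefficients yields $\lambda_{c}\chi(\tau(c))=\chi(c)\lambda'_{\phi(c)}$, i.e.\ the displayed formula $\lambda_{c}=\chi(c)\chi(\tau(c))^{-1}\lambda'_{\phi(c)}$.

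\emph{From $(\phi,\chi)$ to an equivalence.} Conversely, given $\phi\in\operatorname{Aut}(T)$ and $\chi$ as in the statement, I would set $\varphi(X_{g})=\chi(g)X_{\phi(g)}$ and extend linearly. The twisted $2$-cocycle relation on $\chi$ is precisely what makes $\varphi(X_{g}X_{h})=\varphi(X_{g})\varphi(X_{h})$, so $\varphi$ is an algebra homomorphism; it is bijective since $\phi$ is and $\chi$ is $\F^{\times}$-valued, and $\varphi(\cd_{g})=\cd_{\phi(g)}$, so $\varphi$ is an equivalence with associated bijection $\phi$. It then remains to check $\varphi\circ\psi=\psi'\circ\varphi$. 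Both composites are anti-automorphisms of $\cd$ (an automorphism composed with an anti-automorphism), so it suffices to verify the identity on the generators $X_{a_{ij}},X_{b_{ij}}$; there it is the computation of the previous paragraph read backwards, using $\tau'=\phi\tau\phi^{-1}$, \Cref{remtg} for $\psi'$, and the relation $\lambda_{c}=\chi(c)\chi(\tau(c))^{-1}\lambda'_{\phi(c)}$.

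\emph{Main obstacle.} The only genuinely delicate point is that $\phi(a_{ij})$ and $\phi(b_{ij})$ are generally \emph{not} among the chosen generators of $\cd$, so $\psi'$ evaluated there is not read off directly from the defining data $(\tau',\lambda'_{a_{ij}},\lambda'_{b_{ij}})$; this is exactly where \Cref{remtg} enters, to name the scalar $\lambda'_{\phi(c)}$ and to ensure the resulting identities are the ones in the statement. The remainder is bookkeeping: reducing the intertwining condition to the algebra generators (legitimate because both sides are anti-homomorphisms) and keeping track of which scalars are data and which are derived.
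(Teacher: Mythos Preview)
Your proposal is correct and follows essentially the same approach as the paper: both write any equivalence as $\varphi(X_{g})=\chi(g)X_{\phi(g)}$, derive the cocycle condition on $\chi$ from multiplicativity of $\varphi$, and read off $\tau'=\phi\tau\phi^{-1}$ together with the relation on the $\lambda_{c}$ by comparing $\varphi\circ\psi$ and $\psi'\circ\varphi$ (the paper does this comparison for all $g\in T$ while you do it on the generators, which is equivalent since all maps involved are group homomorphisms of $T$). Your explicit appeal to \Cref{remtg} to interpret $\lambda'_{\phi(c)}$ is precisely the bookkeeping the paper leaves implicit.
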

    \begin{proof}
        Suppose that $\psi$ and $\psi'$ are equivalent. Then, there exists a $\phi$-homogeneous automorphism $\varphi\colon X_{g}\in\mathcal{D}\mapsto\chi(g)X_{\phi(g)}\in\mathcal{D}$ for some $\phi\in \operatorname{Aut}(T)$, with $\chi (g) \in \F ^\times $, such that $\varphi\circ \psi=\psi'\circ\varphi$. It implies that, for all $g\in T$, we have
        \begin{align*}
        \varphi\circ\psi(X_{g})&=\varphi(\lambda_{g}X_{\tau(g)})=\chi({\tau(g)})\lambda_{g}X_{\phi\tau(g)};\\
            \psi'\circ\varphi(X_{g})&= \psi'(\chi(g)X_{\phi(g)})=\chi(g)\lambda_{\phi(g)}'X_{\tau'\phi(g)}.
        \end{align*}
        Therefore, $\tau'=\phi\tau\phi^{-1}$ and $\lambda_{g}=\chi(g)\chi(\tau(g))^{-1}\lambda_{\phi(g)}'$. Moreover,
        \begin{align*}
            \chi(g)\chi(h)X_{\phi(g)}X_{\phi(h)}&=\varphi(X_{g})\varphi(X_{h})=\varphi(X_{g}X_{h})=\sigma(g,h)\varphi(X_{g+h})\\
            &=\sigma(g,h)\chi(g+h)X_{\phi(g+h)}\\
            &=\frac{\sigma(g,h)}{\sigma(\phi(g),\phi(h))}\chi(g+h)X_{\phi(g)}X_{\phi(h)}
        \end{align*}
        for all $g,h\in G$. Therefore, $\frac{\chi(g+h)}{\chi(g)\chi(h)}=\frac{\sigma(\phi(g),\phi(h))}{\sigma(g,h)}$.
    
        Conversely, assume that there exists $\phi\in\operatorname{Aut}(T)$ and a map $\chi\colon G\to \F^{\times}$ such that $\frac{\chi(g+h)}{\chi(g)\chi(h)}=\frac{\sigma(\phi(g),\phi(h))}{\sigma(g,h)}$ for all $g,h\in T$, $\tau'=\phi\tau\phi^{-1}$ and $\lambda_{c}=\chi(c)\chi(\tau(c))^{-1}\lambda_{\phi(c)}'$ for all $c\in\{a_{11},b_{11},\ldots, a_{ns_{n}},b_{ns_{n}}\}$. Then,
        \begin{align*}
            \varphi\colon \cd &\to \cd\\
            X_{g}&\mapsto\chi(g)X_{\phi(g)}
        \end{align*}
        is an equivalence. In fact,
        \begin{align*}
            \varphi(X_{g}X_{h})&=\sigma(g,h)\varphi(X_{g+h})=\sigma(g,h)\chi(g+h)X_{\phi(g+h)}\\&=\frac{\sigma(g,h)}{\sigma(\phi(g),\phi(h))}\chi(g+h)X_{\phi(g)}X_{\phi(h)}=\chi(g)\chi(h)X_{\phi(g)}X_{\phi(h)}\\
            &=\varphi(X_{g})\varphi(X_{h})
        \end{align*}
        for all $g, h \in G$.
        Moreover,
        \begin{align*}
            \varphi\circ\psi(X_{c})= \chi(\tau(c))\lambda_{c}X_{\phi\tau(c)}=\chi(c)\lambda_{\phi(c)}'X_{\tau'\phi(c)}=\psi'\circ\varphi(X_{c})
        \end{align*}
        for all $c\in\{a_{11},b_{11},\ldots, a_{ns_{n}},b_{ns_{n}}\}$. Therefore, $\psi$ and $\psi'$ are equivalent.
    \end{proof}
    
    In particular, we have the following corollary.
    
    \begin{corollary}
        \label{propIsomorphismAntiAuto}
        Let $\psi = (\tau,\lambda_{a_{ij}},\lambda_{b_{ij}})$ and $\psi' = (\tau',\lambda_{a_{ij}}',\lambda_{b_{ij}}')$ be two homogeneous anti-automorphisms. Then, $\psi$ and $\psi'$ are isomorphic if and only if $\tau = \tau'$ and there exists $\chi\in \hat{T}$ character of $T$ such that
        $$\chi(\tau(c))\lambda_{c}'=\chi(c)\lambda_{c}$$
        for all $c\in\{a_{11},b_{11},\ldots, a_{ns_{n}},b_{ns_{n}}\}$.\qed
    \end{corollary}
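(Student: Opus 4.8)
The plan is to obtain \Cref{propIsomorphismAntiAuto} as the specialization of \Cref{propositionEquivalenceAntiautomorphisms} to the case in which the equivalence is a graded isomorphism, i.e.\ fixes every homogeneous component. First I would observe that any graded automorphism $\varphi$ of $\cd$ satisfies $\varphi(\cd_{t})=\cd_{t}$ for all $t\in T=\supp\,\cd$; since $\dim\cd_{t}=1$ and $X_{t}$ spans $\cd_{t}$, this forces $\varphi(X_{t})=\chi(t)X_{t}$ for suitable scalars $\chi(t)\in\F^{\times}$, and compatibility with the product $X_{u}X_{v}=\sigma(u,v)X_{u+v}$ yields $\chi(u+v)=\chi(u)\chi(v)$, so $\chi\in\hat{T}$. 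Thus, in the notation of \Cref{propositionEquivalenceAntiautomorphisms}, a graded isomorphism is precisely the case $\phi=\operatorname{id}_{T}$: there the constraint $\chi(g+h)/(\chi(g)\chi(h))=\sigma(\phi(g),\phi(h))/\sigma(g,h)$ degenerates to $\chi$ being multiplicative on $T$.

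Next I would simply read off the remaining two conditions of \Cref{propositionEquivalenceAntiautomorphisms} under $\phi=\operatorname{id}_{T}$. The conjugation relation $\tau'=\phi\tau\phi^{-1}$ becomes $\tau'=\tau$, and the scaling relation $\lambda_{c}=\chi(c)\chi(\tau(c))^{-1}\lambda'_{\phi(c)}$ becomes $\lambda_{c}=\chi(c)\chi(\tau(c))^{-1}\lambda'_{c}$, equivalently $\chi(\tau(c))\lambda_{c}=\chi(c)\lambda'_{c}$, for every generator $c\in\{a_{11},b_{11},\ldots,a_{ns_{n}},b_{ns_{n}}\}$.

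To land exactly on the form in the statement I would then replace $\chi$ by the inverse character $\chi^{-1}\in\hat{T}$: the identity $\chi(\tau(c))\lambda_{c}=\chi(c)\lambda'_{c}$ is equivalent to $\chi^{-1}(\tau(c))\lambda'_{c}=\chi^{-1}(c)\lambda_{c}$, which is the displayed condition (and running the substitution backwards shows the displayed condition implies the one produced by \Cref{propositionEquivalenceAntiautomorphisms}), so the ``if and only if'' is preserved. There is nothing genuinely delicate here; the only points that need a line of care are the identification of graded automorphisms of $\cd$ with characters of $T$ (immediate from one-dimensionality of the homogeneous components) and the consistent inversion of $\chi$ in this last step so that both directions of the equivalence go through.
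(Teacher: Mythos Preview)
Your proposal is correct and is exactly the intended approach: the paper states the result as an immediate corollary of \Cref{propositionEquivalenceAntiautomorphisms} (with no proof beyond the \qedsymbol), and what you have written is precisely the specialization $\phi=\operatorname{id}_{T}$ together with the observation that the cocycle condition then forces $\chi\in\hat T$. Your final substitution $\chi\mapsto\chi^{-1}$ to match the displayed equation is the only bookkeeping step, and you handle it correctly.
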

    
    \subsection{Pauli grading}
    Now we will give some attention to the case where the grading group is $T = \mathbb{Z}_{n}^{2}$. Then,
    $$\cd = \F\langle X_{a}, X_{b}\mid X_{a}^{n}=X_{b}^{n}=1,\, X_{a}X_{b}=\varepsilon X_{b}X_{a}\rangle$$
    where $\varepsilon$ is a primitive $n$-th root of unity. In this case we know that $\operatorname{Aut}(T)=\operatorname{GL}_{2}(\mathbb{Z}_{n})$. Then, we have the following particularizations of our previous results.
    
    \begin{corollary}
        \label{corollaryAntiautomorphism}
        Let $\tau \in \operatorname{GL}_{2}(\mathbb{Z}_{n})$ and let $\lambda_{a},\lambda_{b}\in \F^{\times}$. Then,    
        \begin{align*}
            \psi\colon\cd &\to \cd\\
            X_{a}&\mapsto \lambda_{a}X_{\tau(a)}\\
            X_{b}&\mapsto \lambda_{b}X_{\tau(b)}
        \end{align*}
        defines a $\tau$-homogeneous anti-automorphism on $\cd$ if and only if we have $\det\tau=-1$, $\lambda_{a
        }^{n}=\sigma(\tau(a),\tau(a))^{-\frac{n(n-1)}{2}}$ and $\lambda_{b}^{n}=\sigma(\tau(b),\tau(b))^{-\frac{n(n-1)}{2}}$. Moreover, if $\psi$ is a homogeneous anti-automorphism, we have $\lambda_{a}^{n}=\lambda_{b}^{n}=1$ if $n$ is odd and
        $\lambda_{a}^{n},\lambda_{b}^{n}\in\{-1,1\}$ if $n$ is even.\qed
    \end{corollary}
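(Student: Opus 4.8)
The plan is to obtain the statement as a specialisation of \Cref{propAntiautomorphismEquivalence} to $T=\mathbb{Z}_{n}^{2}$, although the quickest route is to rerun the (very short) computation from the proof of that proposition directly over $\cd=\F\langle X_{a},X_{b}\mid X_{a}^{n}=X_{b}^{n}=1,\,X_{a}X_{b}=\varepsilon X_{b}X_{a}\rangle$. A map $\psi$ of the given form extends to a $\tau$-homogeneous anti-automorphism exactly when it respects these three defining relations. From $\psi(X_{a})^{n}=\lambda_{a}^{n}X_{\tau(a)}^{n}=\lambda_{a}^{n}\sigma(\tau(a),\tau(a))^{n(n-1)/2}X_{0}$ (and its analogue for $b$) one reads off that $X_{a}^{n}=1$ and $X_{b}^{n}=1$ are preserved iff $\lambda_{a}^{n}=\sigma(\tau(a),\tau(a))^{-n(n-1)/2}$ and $\lambda_{b}^{n}=\sigma(\tau(b),\tau(b))^{-n(n-1)/2}$. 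For the last relation, $\psi(X_{b})\psi(X_{a})=\lambda_{a}\lambda_{b}\,\beta(\tau(b),\tau(a))\,X_{\tau(a)}X_{\tau(b)}$, so $X_{a}X_{b}=\varepsilon X_{b}X_{a}$ is preserved iff $\beta(\tau(a),\tau(b))^{-1}=\varepsilon$; computing $\beta(\tau(a),\tau(b))=\varepsilon^{\det\tau}$ exactly as in the proof of \Cref{propAntiautomorphismEquivalence}, this amounts to $\varepsilon^{1+\det\tau}=1$, i.e. $\det\tau\equiv-1\pmod{n}$, which for $\tau\in\operatorname{GL}_{2}(\mathbb{Z}_{n})$ is $\det\tau=-1$.

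Alternatively, staying inside the framework already developed: when $n=p^{k}$ is a prime power, $T=(\mathbb{Z}_{p^{k}})^{2}$ is a $p$-group of the shape fixed in this section with a single block, the only matrix $\tau_{k1}^{(a),(b)}$ attached to $\tau$ is the matrix of $\tau$ itself, so $P_{a,b}=\det\tau$, $P_{b,a}=-\det\tau$, $P_{a,a}=P_{b,b}=0$, and the four congruences \eqref{equationPCongruentModpnZ} collapse to $\det\tau\equiv-1\pmod{p^{k}}$. For composite $n=\prod_{j}p_{j}^{k_{j}}$ one writes $T=\prod_{j}(\mathbb{Z}_{p_{j}^{k_{j}}})^{2}$ and $\tau=\prod_{j}\tau_{j}$, and \Cref{remarkMoreGenralCase} reduces everything to the factors: the determinant conditions recombine through the Chinese Remainder Theorem into $\det\tau=-1$, and the local conditions $\lambda_{a}^{p_{j}^{k_{j}}}=\sigma_{j}(\tau_{j}(a),\tau_{j}(a))^{-p_{j}^{k_{j}}(p_{j}^{k_{j}}-1)/2}$ recombine, after raising to the appropriate power, into $\lambda_{a}^{n}=\sigma(\tau(a),\tau(a))^{-n(n-1)/2}$, the mismatch of exponents being absorbed because $n(n-p_{j}^{k_{j}})/2=p_{j}^{2k_{j}}\binom{n/p_{j}^{k_{j}}}{2}$ is a multiple of $p_{j}^{k_{j}}$ while $\sigma_{j}(\tau_{j}(a),\tau_{j}(a))$ is a $p_{j}^{k_{j}}$-th root of unity.

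For the final assertion I would make $\sigma(\tau(a),\tau(a))$ explicit via the representative \eqref{sigmaRepresentant}: writing $\tau(a)=\alpha a+\gamma b$ gives $\sigma(\tau(a),\tau(a))=\varepsilon^{-\alpha\gamma}$, hence $\lambda_{a}^{n}=\varepsilon^{\alpha\gamma\,n(n-1)/2}$, and symmetrically $\lambda_{b}^{n}=\varepsilon^{\alpha'\gamma'\,n(n-1)/2}$ with $\tau(b)=\alpha'a+\gamma'b$. Since $\varepsilon$ is a primitive $n$-th root of unity, the outcome depends only on $n(n-1)/2\bmod n$: if $n$ is odd then $(n-1)/2\in\mathbb{Z}$, so $n(n-1)/2\equiv0$ and $\lambda_{a}^{n}=\lambda_{b}^{n}=1$; if $n$ is even then $n(n-1)/2=(n/2)(n-1)\equiv n/2\pmod{n}$, so $\lambda_{a}^{n},\lambda_{b}^{n}\in\{\varepsilon^{0},\varepsilon^{n/2}\}=\{1,-1\}$. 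I do not anticipate a genuine obstacle, as the corollary is a direct specialisation; the only points requiring care are the notational collision between the index ``$n$'' in \Cref{propAntiautomorphismEquivalence} and the modulus ``$n$'' of the Pauli grading, together with the small arithmetic book-keeping needed to recombine the prime-power data (or, along the direct route, nothing beyond the computations of $\beta(\tau(a),\tau(b))$ and $X_{\tau(a)}^{n}$).
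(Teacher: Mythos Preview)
Your proposal is correct and matches the paper's treatment: the corollary is stated with no proof (just \qed), being regarded as an immediate specialisation of \Cref{propAntiautomorphismEquivalence}, and your direct route---rerunning the short computation $\psi(X_{c})^{n}=\lambda_{c}^{n}\sigma(\tau(c),\tau(c))^{n(n-1)/2}X_{0}$ and $\beta(\tau(a),\tau(b))=\varepsilon^{\det\tau}$---is exactly that specialisation. Your handling of the final assertion via the explicit formula \eqref{sigmaRepresentant} and the parity of $n(n-1)/2\bmod n$ is also correct; the alternative CRT route you sketch is valid but unnecessary, since the single-block computation already works for arbitrary $n$ without decomposing into prime-power factors.
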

    
    \begin{remark}
        \label{remarkAutomorphismEquivalenceParticularCase}
        By \Cref{remarkAutomorphismEquivalence}, the conditions of \Cref{corollaryAntiautomorphism}, replacing $\det\tau=-1$ with
        $\tau\in\operatorname{SL}_{2}(\mathbb{Z}_{n})$
        describe $\tau$-homogeneous automorphisms in this case. 
    \end{remark} 
    
    \begin{corollary}
        \label{corollaryInvolution}
        Let $(\tau,\lambda_{a},\lambda_{b})$ be a homogeneous anti-automorphism. Then, it is an involution if and only if $\operatorname{tr}(\tau)=0$ and $\lambda_{a}\lambda_{\tau(a)}=\lambda_{b}\lambda_{\tau(b)}=1$ for all $i\in\{1,\ldots,n\}$, $j\in \{1,\ldots, s_{i}\}$.
    \end{corollary}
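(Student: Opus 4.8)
\emph{Proof proposal.} The plan is to reduce to \Cref{propInvolution} and then dispose of the condition $\tau^{2}=1$ by Cayley--Hamilton. Since $(\tau,\lambda_{a},\lambda_{b})$ is assumed to be a homogeneous anti-automorphism, \Cref{corollaryAntiautomorphism} gives $\tau\in\operatorname{GL}_{2}(\mathbb{Z}_{n})$ with $\det\tau=-1$. Applying \Cref{propInvolution} in the case $T=\mathbb{Z}_{n}^{2}$ (so that there is a single pair of generators $a,b$), the map $(\tau,\lambda_{a},\lambda_{b})$ is an involution if and only if $\tau^{2}=1$ together with $\lambda_{a}\lambda_{\tau(a)}=\lambda_{b}\lambda_{\tau(b)}=1$. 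Hence it suffices to show that, for $\tau\in\operatorname{GL}_{2}(\mathbb{Z}_{n})$ with $\det\tau=-1$, one has $\tau^{2}=1$ if and only if $\operatorname{tr}(\tau)=0$.

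For this I would invoke the Cayley--Hamilton theorem over the commutative ring $\mathbb{Z}_{n}$: every $\tau\in M_{2}(\mathbb{Z}_{n})$ satisfies $\tau^{2}-\operatorname{tr}(\tau)\,\tau+\det(\tau)\,I=0$. Using $\det\tau=-1$, this rewrites as $\tau^{2}=\operatorname{tr}(\tau)\,\tau+I$. Therefore $\tau^{2}=I$ holds exactly when $\operatorname{tr}(\tau)\,\tau=0$; since $\tau$ is invertible in $M_{2}(\mathbb{Z}_{n})$, multiplying on the right by $\tau^{-1}$ shows this is equivalent to $\operatorname{tr}(\tau)\,I=0$, i.e. to $\operatorname{tr}(\tau)=0$ in $\mathbb{Z}_{n}$. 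Combining this with the equivalence from \Cref{propInvolution} gives the statement, the coefficient conditions $\lambda_{a}\lambda_{\tau(a)}=\lambda_{b}\lambda_{\tau(b)}=1$ being carried over unchanged.

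There is essentially no obstacle; the only points worth stressing are that Cayley--Hamilton is valid over $\mathbb{Z}_{n}$ because it is commutative, and that the cancellation step genuinely uses $\det\tau\in\mathbb{Z}_{n}^{\times}$, which is guaranteed since $\tau\in\operatorname{GL}_{2}(\mathbb{Z}_{n})$. If one prefers to avoid Cayley--Hamilton, the same conclusion follows from a direct computation: writing $\tau=\left(\begin{smallmatrix} p & q \\ r & s\end{smallmatrix}\right)$ and using $ps-qr=-1$, one gets $\tau^{2}=(p+s)\tau+I$, so $\tau^{2}=I$ precisely when $p+s=\operatorname{tr}(\tau)=0$.
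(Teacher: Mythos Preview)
Your proposal is correct and follows the same approach as the paper: reduce to \Cref{propInvolution} and then use that, for $\tau\in\operatorname{GL}_{2}(\mathbb{Z}_{n})$ with $\det\tau=-1$, one has $\tau^{2}=1$ if and only if $\operatorname{tr}(\tau)=0$. The paper states this last equivalence without justification; your Cayley--Hamilton argument (and the alternative direct computation) supplies exactly the missing detail.
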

    \begin{proof}
        It is enough to note that if $\det\tau=-1$, then $\tau^{2}=1$ if and only if $\operatorname{tr}(\tau)=0$. 
    \end{proof}
    
    Our next goal is to show that we always have a $\tau$-homogeneous involution for all $\tau\in\operatorname{GL}_{2}(\mathbb{Z}_{n})$ satisfying $\det\tau=-1$ and $\operatorname{tr}(\tau)=0$. Before going to it, remember that a quasi-regular ideal $J$ of a unital ring $\mathcal{A}$ is an ideal of $\mathcal{A}$ in which $1-x$ is invertible for any $x \in J$. For example, if $\mathcal{A}=\mathbb{Z}_{p^{k}}$ with $p$ being a prime number and $k>1$, then any ideal of the form $J=p^{i}\mathbb{Z}_{p^{k}}$ is a quasi-regular ideal of $\mathcal{A}$. The following lemma is well-known. 
    

    
    \begin{lemma}
        \label{lemmaInvertibleQuasiregular}
        Let $\mathcal{A}$ be a unital ring and $J\subseteq \mathcal{A}$ a quasi-regular ideal. If $x\in\mathcal{A}/J$ is invertible, then for any $a \in \mathcal{A}$ such that $a+J=x$, we have that $a$ is invertible and $a^{-1}+J=x^{-1}$.
    \end{lemma}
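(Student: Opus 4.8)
The plan is to lift the inverse of $x$ from $\mathcal{A}/J$ back to $\mathcal{A}$ and then use quasi-regularity of $J$ to upgrade the resulting one-sided approximate inverses into an honest two-sided inverse. Write $\pi\colon \mathcal{A}\to\mathcal{A}/J$ for the canonical projection, a unital ring homomorphism. Since $x$ is invertible in $\mathcal{A}/J$, I would choose $b\in\mathcal{A}$ with $\pi(b)=x^{-1}$; then $\pi(ab)=x\,x^{-1}=1+J$ and $\pi(ba)=x^{-1}x=1+J$, so there exist $j,j'\in J$ with $ab=1-j$ and $ba=1-j'$. Because $J$ is quasi-regular, both $1-j$ and $1-j'$ are invertible in $\mathcal{A}$.

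Next I would deduce that $a$ itself is invertible. From $ab=1-j$ invertible, $a$ has the right inverse $r:=b(1-j)^{-1}$; from $ba=1-j'$ invertible, $a$ has the left inverse $\ell:=(1-j')^{-1}b$. The standard argument $\ell=\ell(ar)=(\ell a)r=r$ then shows $\ell=r$, and this common element is a two-sided inverse of $a$. Hence $a$ is invertible in $\mathcal{A}$ with $a^{-1}=\ell=r$.

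Finally, applying $\pi$ to the identity $aa^{-1}=a^{-1}a=1$ gives $x\,\pi(a^{-1})=\pi(a^{-1})\,x=1+J$, so $\pi(a^{-1})=x^{-1}$; that is, $a^{-1}+J=x^{-1}$, as desired. Note this works for every choice of $a$ with $a+J=x$, since any two such differ by an element of $J$ and the conclusion does not depend on the representative.

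There is essentially no serious obstacle here; the argument is routine. The one point that needs a little care is not to be content with a one-sided inverse: quasi-regularity of $J$ is exactly what guarantees that the "defects" $ab-1$ and $ba-1$, which a priori only lie in $J$, do not obstruct invertibility, and the elementary fact that an element of a (not necessarily commutative) unital ring possessing both a left and a right inverse is invertible, with the two inverses coinciding, is what closes the argument.
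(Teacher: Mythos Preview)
Your proof is correct and follows the same approach as the paper: lift $x^{-1}$ to some $b\in\mathcal{A}$, observe that $ab-1$ and $ba-1$ lie in $J$, and invoke quasi-regularity. In fact, your version is more careful than the paper's terse argument, which only writes out the right-inverse side and does not explicitly verify that $a^{-1}+J=x^{-1}$.
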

    \begin{proof}
        Let $a,b\in\mathcal{A}$ such that $a+J=x$ and $b+J=x^{-1}$. Then, $ab=1-y$ for some $y\in J$. Since $J$ is quasi-regular, $1-y$ is invertible. Thus, $a$ is invertible.
    \end{proof}
    
        If $C$ is a commutative unital ring, then the group $\operatorname{SL}_{2}(C)$ acts on $\operatorname{GL}_{2}(C)$ by conjugation. Take $\tau\in\operatorname{GL}_{2}(\mathbb{Z}_{n})$ with $\det\tau=-1$ and $\operatorname{tr}(\tau)=0$. \Cref{propositionEquivalenceAntiautomorphisms} and \Cref{remarkAutomorphismEquivalenceParticularCase} show that there exists a $\tau$-homogeneous involution on $\cd$ if and only if there exists a $\tau'$-homogeneous involution on $\cd$ for any $\tau'$ in the orbit of $\tau$. With this in mind, we shall investigate the orbits of the elements $\tau\in\operatorname{GL}_{2}(\mathbb{Z}_{n})$ with $\det\tau=-1$ and $\operatorname{tr}(\tau)=0$. Let $A,B\in \operatorname{GL}_{2}(C)$. We shall denote that $A$ and $B$ are in the same $\mathrm{SL}_2(C)$-orbit by $A\sim B$.

    For each $n\in\mathbb{N}$, we define the set $\mathcal{O}_{n}$ as the subset of $\operatorname{GL}_{2}(\mathbb{Z}_{n})$ given by:
    \begin{itemize}
        \item 
        $\begin{pmatrix}
            1 & 0\\
            0 & -1
        \end{pmatrix}\in\mathcal{O}_{n}$;
        \item if $n$ is even, then 
        $\begin{pmatrix}
            0 & 1\\
            1 & 0
        \end{pmatrix}\in\mathcal{O}_{n}$;
        \item if $4\mid n$, then
        $\begin{pmatrix}
            1           & 2\\
            \frac{n}{2} & -1
        \end{pmatrix}\in\mathcal{O}_{n}$.
    \end{itemize}

    \begin{lemma}
        \label{lemmaEquivalenceClassMatrices}
        Let $\tau\in\operatorname{GL}_{2}(\mathbb{Z}_{n})$ such that $\det\tau=-1$ and $\operatorname{tr}(\tau)=0$. Then, $\tau\sim\theta$ for some $\theta \in \mathcal{O}_{n}$.
    \end{lemma}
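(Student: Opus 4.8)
The plan is to reduce to prime‑power moduli via the Chinese Remainder Theorem and treat the odd primes and the prime $2$ separately, the latter being the crux. If $n=q_{1}\cdots q_{m}$ is the factorization into prime powers, then $\operatorname{GL}_{2}(\mathbb{Z}_{n})\cong\prod_{i}\operatorname{GL}_{2}(\mathbb{Z}_{q_{i}})$ and likewise for $\operatorname{SL}_{2}$, so $\tau$ becomes a tuple $(\tau^{(i)})_{i}$ with each component still of trace $0$ and determinant $-1$ (hence $(\tau^{(i)})^{2}=I$ by Cayley--Hamilton), and an $\operatorname{SL}_{2}(\mathbb{Z}_{n})$-conjugacy is a componentwise $\operatorname{SL}_{2}$-conjugacy. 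Over $\mathbb{Z}_{p^{k}}$ with $p$ odd, the elements $e=\tfrac12(I+\tau)$ and $I-e$ are orthogonal idempotents of trace $1$, so their images are complementary free rank‑one direct summands of $\mathbb{Z}_{p^{k}}^{2}$ on which $\tau$ acts by $+1$ and $-1$; choosing generators and rescaling one of them by a unit to normalize the change‑of‑basis determinant gives $\tau^{(i)}\sim\operatorname{diag}(1,-1)$. I then prove the main point: over $\mathbb{Z}_{2^{\ell}}$ every such matrix is $\operatorname{SL}_{2}$-conjugate to $\operatorname{diag}(1,-1)$, to $\bigl(\begin{smallmatrix}0&1\\1&0\end{smallmatrix}\bigr)$, or (when $\ell\ge2$) to $\bigl(\begin{smallmatrix}1&2\\2^{\ell-1}&-1\end{smallmatrix}\bigr)$. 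Reassembly is then routine: both $\bigl(\begin{smallmatrix}0&1\\1&0\end{smallmatrix}\bigr)$ and $\bigl(\begin{smallmatrix}1&2\\0&-1\end{smallmatrix}\bigr)$ are $\operatorname{SL}_{2}$-conjugate to $\operatorname{diag}(1,-1)$ over each odd prime power (same diagonalization, with the determinant corrected by a diagonal matrix), so using these conjugacies in the odd slots, the tuple made of the relevant $2$-local form together with copies of $\operatorname{diag}(1,-1)$ is $\operatorname{SL}_{2}(\mathbb{Z}_{n})$-conjugate to the CRT image of the prescribed element of $\mathcal{O}_{n}$.

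For the $2$-part, put $R=\mathbb{Z}_{2^{\ell}}$ and reduce $\tau$ modulo $2$. Since $\operatorname{SL}_{2}(R)\twoheadrightarrow\operatorname{SL}_{2}(\mathbb{Z}_{2})$ and $\operatorname{SL}_{2}(\mathbb{Z}_{2})=\operatorname{GL}_{2}(\mathbb{Z}_{2})\cong S_{3}$ has only the two trace‑zero conjugacy classes, that of $\bar I$ and that of $\bigl(\begin{smallmatrix}0&1\\1&0\end{smallmatrix}\bigr)$, after an $\operatorname{SL}_{2}(R)$-conjugation I may assume $\tau\equiv I$ or $\tau\equiv\bigl(\begin{smallmatrix}0&1\\1&0\end{smallmatrix}\bigr)\pmod 2$; write $\tau=\bigl(\begin{smallmatrix}\alpha&\beta\\\gamma&-\alpha\end{smallmatrix}\bigr)$ with $\alpha^{2}+\beta\gamma=1$. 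In the second case $\gamma$ is a unit, so conjugating by $\bigl(\begin{smallmatrix}1&x\\0&1\end{smallmatrix}\bigr)$ with $x=\gamma^{-1}\alpha$ clears the diagonal, and $\det\tau=-1$ then forces $\tau=\bigl(\begin{smallmatrix}0&\gamma^{-1}\\\gamma&0\end{smallmatrix}\bigr)=\operatorname{diag}(\gamma,1)^{-1}\bigl(\begin{smallmatrix}0&1\\1&0\end{smallmatrix}\bigr)\operatorname{diag}(\gamma,1)$. To turn this into an $\operatorname{SL}_{2}$-conjugacy I use that the centralizer of $\bigl(\begin{smallmatrix}0&1\\1&0\end{smallmatrix}\bigr)$ consists of the matrices $\bigl(\begin{smallmatrix}x&y\\y&x\end{smallmatrix}\bigr)$, whose determinants $x^{2}-y^{2}$ fill out all of $R^{\times}$ (all four square classes mod $8$ are hit by taking $y\in\{0,2\}$ or $x\in\{0,2\}$); composing $\operatorname{diag}(\gamma,1)$ with a suitable such matrix lands $\tau$ on $\bigl(\begin{smallmatrix}0&1\\1&0\end{smallmatrix}\bigr)\in\mathcal{O}_{2^{\ell}}$.

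In the first case $\alpha$ is a unit, $\beta,\gamma$ are even, and $\alpha^{2}\equiv1\pmod 8$ gives $\beta\gamma\equiv0\pmod 8$. I conjugate $\tau$ to $\operatorname{diag}(\alpha,-\alpha)$ (so $\alpha^{2}=1$): conjugation by $\bigl(\begin{smallmatrix}1&0\\y&1\end{smallmatrix}\bigr)$ replaces $\gamma$ by $\gamma-2\alpha y-\beta y^{2}$, and since $h(y)=\beta y^{2}+2\alpha y-\gamma$ has $v_{2}(h'(y))=1$ for every $y$, the strong form of Hensel's lemma supplies a root as soon as some $y_{0}$ satisfies $v_{2}(h(y_{0}))\ge3$; using $\beta\gamma\equiv0\pmod 8$ one may, after at most one swap of $\beta$ and $\gamma$ by conjugation with $\bigl(\begin{smallmatrix}0&1\\-1&0\end{smallmatrix}\bigr)$, assume $v_{2}(\gamma)\ge2$, and then $y_{0}\in\{0,2\}$ works (the sole exception being $\ell=2$, $\beta=\gamma=2$, where $\tau$ already equals $\bigl(\begin{smallmatrix}1&2\\2&-1\end{smallmatrix}\bigr)$ up to the swap). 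A further conjugation by $\bigl(\begin{smallmatrix}1&x\\0&1\end{smallmatrix}\bigr)$ clears $\beta$. Finally I match $\operatorname{diag}(\alpha,-\alpha)$ with $\mathcal{O}_{2^{\ell}}$: the square roots of $1$ in $R$ are $\pm1$, and additionally $\pm(1+2^{\ell-1})$ when $\ell\ge3$; for $\alpha=\pm1$ the swap $\bigl(\begin{smallmatrix}0&1\\-1&0\end{smallmatrix}\bigr)$ gives $\operatorname{diag}(\alpha,-\alpha)\sim\operatorname{diag}(1,-1)$, while for $\alpha=\pm(1+2^{\ell-1})$ one checks directly that $P=\bigl(\begin{smallmatrix}1+2^{\ell-2}&1\\2^{\ell-2}&1\end{smallmatrix}\bigr)\in\operatorname{SL}_{2}(R)$ conjugates $\operatorname{diag}(1+2^{\ell-1},-1-2^{\ell-1})$ to $\bigl(\begin{smallmatrix}1&2\\2^{\ell-1}&-1\end{smallmatrix}\bigr)$.

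The step I expect to be the main obstacle is this last analysis, $\tau\equiv I\pmod 2$ over $\mathbb{Z}_{2^{\ell}}$: clearing $\gamma$ genuinely uses the relation $\alpha^{2}+\beta\gamma=1$ (without it one cannot in general achieve $v_{2}(h(y_{0}))\ge3$), the small modulus $\ell=2$ must be inspected by hand, and recognizing that the "extra" diagonal involution $\operatorname{diag}(1+2^{\ell-1},-1-2^{\ell-1})$ is conjugate precisely to the third member of $\mathcal{O}_{2^{\ell}}$ requires exhibiting the non‑diagonal matrix $P$ above rather than anything that diagonalizes it.
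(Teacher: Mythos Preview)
Your argument is correct and follows a genuinely different route from the paper's. Both proofs localize via the Chinese Remainder Theorem and treat odd and even prime powers separately, and both finish with the same reassembly observation that $\bigl(\begin{smallmatrix}0&1\\1&0\end{smallmatrix}\bigr)$ and $\bigl(\begin{smallmatrix}1&2\\0&-1\end{smallmatrix}\bigr)$ are $\operatorname{SL}_{2}$-conjugate to $\operatorname{diag}(1,-1)$ over each odd $\mathbb{Z}_{q_i}$. The difference is in how the prime-power case is handled. The paper proceeds by induction on the exponent: it reduces $\tau$ modulo $p^{i-1}$, invokes the induction hypothesis and Lemma~\ref{lemmaInvertibleQuasiregular} to lift the conjugating matrix, and then for $p=2$ exhausts a table of the residual possibilities, exhibiting an explicit $P\in\operatorname{SL}_{2}(\mathbb{Z}_{2^{i}})$ in each of eleven cases. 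Your approach is more structural: for odd $p$ you bypass induction entirely using the idempotent $\tfrac12(I+\tau)$, and for $p=2$ you reduce only modulo $2$, separating the two parity types, and then handle each type in one stroke---the off-diagonal type via the centralizer of $\bigl(\begin{smallmatrix}0&1\\1&0\end{smallmatrix}\bigr)$ and surjectivity of $x^{2}-y^{2}$ onto $(\mathbb{Z}_{2^{\ell}})^{\times}$, the diagonal type via a Hensel argument that kills the $(2,1)$ entry. Your method is cleaner and scales without case tables; the paper's is more elementary (no Hensel, no square-class analysis) and entirely self-contained. One small point worth tightening in your write-up: after the lower-unipotent conjugation the diagonal entry becomes $\alpha'=\alpha+\beta y$ rather than the original $\alpha$, so it is $(\alpha')^{2}=1$ that you are classifying at the end, not $\alpha^{2}=1$; this is only a notational slip and does not affect the argument.
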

    \begin{proof}
        We will first show the result for $n=p^{i}$ where $p$ is a prime number and $i>0$. We will proceed by induction on $i$. The base case follows from the fact that $\mathbb{Z}_{p}$ is a field. Now, let $i>1$. Suppose the result is valid for $i-1$. Then, consider $\tau'$ being the image of $\tau$ in $M_{2}(\mathbb{Z}_{n})/M_{2}(p^{i-1}\mathbb{Z}_{n})\cong M_{2}(\mathbb{Z}_{p^{i-1}})$. Then, $\det\tau'=-1$ and $\operatorname{tr}(\tau')=0$. 
        
        If $p\neq 2$, by induction hypothesis and \Cref{lemmaInvertibleQuasiregular}, we can assume that $\tau'$ has the form
        $$\tau'=
        \begin{pmatrix}
            1            & q_{1}p^{i-1}\\
            q_{2}p^{i-1} & -1
        \end{pmatrix}.$$
        Since $n$ is odd, $2\in\mathbb{Z}_{n}$ is invertible. Let $P = 
        \begin{pmatrix}
                 1             & -2^{-1}q_{1}p^{i-1}\\
            2^{-1}q_{2}p^{i-1} &         1
        \end{pmatrix}\in M_{2}(\mathbb{Z}_{n})$. Notice that $\det P=1$ and 
        $$P^{-1}\tau' P= 
        \begin{pmatrix}
            1 & 0\\
            0 & -1
        \end{pmatrix}.$$

        If $p=2$, by the induction hypothesis and \Cref{lemmaInvertibleQuasiregular}, we can assume that $\tau'$ has one of the following forms
        \begin{itemize}
            \item $\tau'=
            \begin{pmatrix}
                1+2^{i-1}q_{1} & 2^{i-1}q_{2}\\
                2^{i-1}q_{3}   & -1 - 2^{i-1}q_{1}
            \end{pmatrix}$;
            \item $\tau'=
            \begin{pmatrix}
                2^{i-1}q_{1}   & 1 + 2^{i-1}q_{2}\\
                1-2^{i-1}q_{2}& -2^{i-1}q_{1}
            \end{pmatrix}$.
        \end{itemize}
        In the first case, $\tau'$ can be in the same orbit as
        $\begin{pmatrix}
            1 & 0\\
            0 & -1
        \end{pmatrix}$ or
        $\begin{pmatrix}
            1           & 2\\
            2^{i-1} & -1
        \end{pmatrix}$ as the next table shows.
        
        \begin{tabular}{|c | c | c | c |} 
            \hline
            &&&\\[-1em]
            $i$ & $\tau'$ & $P$ & $P^{-1}\tau'P$\\[0.3em]
            \hline
            &&&\\[-1em]
            $\geq 2$ & 
            $\begin{pmatrix}
                1 & 0\\
                0 & -1
            \end{pmatrix}$ & 
            $\begin{pmatrix}
                1 & 0\\
                0 & 1
            \end{pmatrix}$ & 
            $\begin{pmatrix}
                1 & 0\\
                0 & -1
            \end{pmatrix}$\\[1em]
            \hline
            &&&\\[-1em]
            $\geq 2$ & 
            $\begin{pmatrix}
                1 & 2^{i-1}\\
                0 & -1
            \end{pmatrix}$ & 
            $\begin{pmatrix}
                1 & 2^{i-2}\\
                0 & 1
            \end{pmatrix}$ &
            $\begin{pmatrix}
                1 & 0\\
                0 & -1
            \end{pmatrix}$\\[1em]
            \hline
            &&&\\[-1em]
            $\geq 2$ & 
            $\begin{pmatrix}
                1             & 0\\
                2^{i-1} & -1
            \end{pmatrix}$ & 
            $\begin{pmatrix}
                1              & 0\\
                2^{i-2} & 1
            \end{pmatrix}$ &
            $\begin{pmatrix}
                1 & 0\\
                0 & -1
            \end{pmatrix}$\\[1em]
            \hline
            &&&\\[-1em]
            $2$ & 
            $\begin{pmatrix}
                1 & 2\\
                2 & -1
            \end{pmatrix}$ & 
            $\begin{pmatrix}
                1 & 0\\
                0 & 1
            \end{pmatrix}$ & 
            $\begin{pmatrix}
                1 & 2\\
                2 & -1
            \end{pmatrix}$\\[1em]
            \hline
            &&&\\[-1em]
            $\geq 3$ & 
            $\begin{pmatrix}
                1 & 2^{i-1}\\
                2^{i-1} & -1
            \end{pmatrix}$ & 
            $\begin{pmatrix}
                2^{i-2} + 1 & 2^{i-2}\\
                2^{i-2}     & -2^{i-2} + 1
            \end{pmatrix}$ &
            $\begin{pmatrix}
                1 & 0\\
                0 & -1
            \end{pmatrix}$\\[1em]
            \hline
            &&&\\[-1em]
            $2$ & 
            $\begin{pmatrix}
                -1 & 0\\
                0  & 1
            \end{pmatrix}$ &
            $\begin{pmatrix}
                0  & 1\\
                -1 & 0
            \end{pmatrix}$ &
            $\begin{pmatrix}
                1 & 0\\
                0 & -1
            \end{pmatrix}$\\[1em]
            \hline
            &&&\\[-1em]
            $\geq 3$ & 
            $\begin{pmatrix}
                1 + 2^{i-1}  & 0\\
                0            & -1 - 2^{i-1}
            \end{pmatrix}$ &
            $\begin{pmatrix}
                1 + 2^{i-2} & 1\\
                -2^{i-2}    & 1 - 2^{i-1}
            \end{pmatrix}$ &
            $\begin{pmatrix}
                1            & 2\\
                2^{i-1} & -1
            \end{pmatrix}$\\[1em]
            \hline
            &&&\\[-1em]
            $2$ & 
            $\begin{pmatrix}
                -1 & 2\\
                0  & 1
            \end{pmatrix}$  &
            $\begin{pmatrix}
                1  & 1\\
                -1 & 0
            \end{pmatrix}$ &
            $\begin{pmatrix}
                1 & 0\\
                0 & -1
            \end{pmatrix}$\\[1em]
            \hline
            &&&\\[-1em]
            $\geq 3$ & 
            $\begin{pmatrix}
                1 + 2^{i-1} & 2^{i-1}\\
                0           & -1 - 2^{i-1}
            \end{pmatrix}$ & 
            $\begin{pmatrix}
                1              & 1\\
                2^{i-2} & 1 + 2^{i-2}
            \end{pmatrix}$ &
            $\begin{pmatrix}
                1            & 2\\
                2^{i-1} & -1
            \end{pmatrix}$\\[1em]
            \hline
            &&&\\[-1em]
            $2$ & 
            $\begin{pmatrix}
                -1 & 0\\
                2  & 1
            \end{pmatrix}$  &
            $\begin{pmatrix}
                0  & 1\\
                -1 & 1
            \end{pmatrix}$ &
            $\begin{pmatrix}
                1 & 0\\
                0 & -1
            \end{pmatrix}$\\[1em]
            \hline
            &&&\\[-1em]
            $\geq 3$ & 
            $\begin{pmatrix}
                1 + 2^{i-1} & 0\\
                2^{i-1}     & -1 - 2^{i-1}
            \end{pmatrix}$  & 
            $\begin{pmatrix}
                1 & 1 + 2^{i-2}\\
                0 & 1
            \end{pmatrix}$ &
            $\begin{pmatrix}
                1            & 2\\
                2^{i-1} & -1
            \end{pmatrix}$\\[1em]
            \hline
            &&&\\[-1em]
            $\geq 2$ & 
            $\begin{pmatrix}
                1 + 2^{i-1}  & 2^{i-1}\\
                2^{i-1} & -1 - 2^{i-1}
            \end{pmatrix}$ &
            $\begin{pmatrix}
                1 & 1\\
                0 & 1
            \end{pmatrix}$ &
            $\begin{pmatrix}
                1            & 2\\
                2^{i-1} & -1
            \end{pmatrix}$\\[1em]
            \hline
        \end{tabular}

        In the second case, let $P=
        \begin{pmatrix}
            1-q_{2}        & q_{2} + 2q_{1}q_{2}\\
            2q_{1} - q_{2} & 1 - q_{2} + 2q_{1}q_{2}
        \end{pmatrix}$ if $i=2$, $P=
        \begin{pmatrix}
            1               & 2q_{2}\\
            4q_{1} + 2q_{2} & 1 + 4q_{2}
        \end{pmatrix}$ if $i=3$ and $P=
        \begin{pmatrix}
            2^{i-2}q_{2}-1 & 0\\
            2^{i-1}q_{1} & -2^{i-2}q_{2} - 1
        \end{pmatrix}$ if $i>3$. Notice that $\det P = 1$ and $$P^{-1}\tau' P= 
        \begin{pmatrix}
            0 & 1\\
            1 & 0
        \end{pmatrix}.$$

        Then, by induction, the proof of the lemma is complete when $n$ is a power of a prime number. 
        
        Now, let $n\in\mathbb{N}$ and $n=p_{1}^{\alpha_{1}}\cdots p_{k}^{\alpha_{k}}$ be its decomposition in prime numbers. Write $q_{i}=p_{i}^{\alpha_{i}}$. Let $\tau\in\operatorname{GL}_{2}(\mathbb{Z}_{n})$ such that $\det\tau=-1$ and $\operatorname{tr}(\tau)=0$. Let $\pi_{q_{i}}\colon \mathbb{Z}_{n}\to\mathbb{Z}_{q_{i}}$ the canonical surjective homomorphism. Then, $\tau_{q_{i}}=\pi_{q_{i}}(\tau)$ has determinant equal to $-1$ and trace equal to $0$. Hence, $\tau_{q_{i}}\sim \theta_{i}$ for some $\theta_{i}\in\mathcal{O}_{q_{i}}$. Now, using an isomorphism $\mathbb{Z}_{n}\cong\mathbb{Z}_{q_{1}}\times\cdots\times\mathbb{Z}_{q_{k}}$ and the fact that
        $$
        \begin{pmatrix}
            1 & 0\\
            0 & -1
        \end{pmatrix} \sim
        \begin{pmatrix}
            0 & 1\\
            1 & 0
        \end{pmatrix} \sim
        \begin{pmatrix}
            1 & 2\\
            0 & -1
        \end{pmatrix}
        $$
        in $\mathbb{Z}_{q_{i}}$ for all odd $q_{i}$ we conclude that $\tau\sim \theta$ for some $\theta\in\mathcal{O}_{n}$.
    \end{proof}

    \begin{remark}
        If $n$ is even, then 
        $\begin{pmatrix}
            1 & 0\\
            0 & -1
        \end{pmatrix}$ and
        $\begin{pmatrix}
            0 & 1\\
            1 & 0
        \end{pmatrix}$ are not similar. Moreover, if $n\mid 4$, $
        \begin{pmatrix}
            1           & 2\\
            \frac{n}{2} & -1
        \end{pmatrix}$ is not similar to 
        $\begin{pmatrix}
            1 & 0\\
            0 & -1
        \end{pmatrix}$ nor
        $\begin{pmatrix}
            0 & 1\\
            1 & 0
        \end{pmatrix}$.
    \end{remark}

    Let $n\in\mathbb{N}$. For the next theorem, we shall denote the elements of $\mathcal{O}_{n}$ as follows.
    \begin{align*}
        \theta_{1}&=
        \begin{pmatrix}
            1 & 0\\
            0 & -1
        \end{pmatrix},\quad
        \theta_{2}=
        \begin{pmatrix}
            0 & 1\\
            1 & 0
        \end{pmatrix}\quad\text{and}\quad
        \theta_{3}=
        \begin{pmatrix}
            1 & 2\\
            \frac{n}{2} & -1
        \end{pmatrix}.
    \end{align*}

    \begin{theorem}
        Let $T=\mathbb{Z}_{n}^{2}$ and $\F$ be an algebraically closed field of characteristic different from $n$. Let $\cd= M_{n}(\F)$ be a matrix algebra endowed with a division grading with support $T$. If $\tau \in \operatorname{GL}_{2}(\mathbb{Z}_{n})$ with $\det\tau=-1$ and $\operatorname{tr}(\tau)=0$, then there exists a $\tau$-homogeneous involution on $\cd$. Moreover,
        \begin{enumerate}
            \item If $n$ is odd, all homogeneous involutions are equivalent to $(\theta_{1},1,1)$ and there is a unique isomorphism class of $\tau$-homogeneous involutions;
            \item If $n$ is even and $4\nmid n$,
            \begin{enumerate}
                \item any homogeneous involution is equivalent to $(\theta_{1},1,1)$, $(\theta_{1},-1,\varepsilon)$ or $(\theta_{2},1,1)$;
                \item if $\tau\sim\theta_{1}$, there are four isomorphism classes of $\tau$-homogeneous involutions;
                \item if $\tau\sim\theta_{2}$, there is a unique isomorphism class of $\tau$-homogeneous involutions;
            \end{enumerate}
            \item If $4\mid n$,
            \begin{enumerate}
                \item any homogeneous involution is equivalent to $(\theta_{1},1,1)$, $(\theta_{1},1,\varepsilon)$, $(\theta_{1},-1,\varepsilon)$, $(\theta_{2},1,1)$ or $(\theta_{3},\varepsilon^{\frac{n}{4}},\varepsilon)$;
                \item if $\tau\sim\theta_{1}$, there are four isomorphism classes of $\tau$-homogeneous involutions;
                \item if $\tau\sim\theta_{2}$, there is a unique isomorphism class of $\tau$-homogeneous involutions;
                \item if $\tau\sim\theta_{3}$, there is a unique isomorphism class of $\tau$-homogeneous involutions;
            \end{enumerate}
        \end{enumerate}
    \end{theorem}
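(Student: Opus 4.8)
The plan is to reduce everything to the three normal forms $\theta_{1},\theta_{2},\theta_{3}$ of \Cref{lemmaEquivalenceClassMatrices} and then run the explicit descriptions of \Cref{corollaryAntiautomorphism,corollaryInvolution,propIsomorphismAntiAuto} and \Cref{propositionEquivalenceAntiautomorphisms}. For existence: given $\tau$ with $\det\tau=-1$ and $\operatorname{tr}(\tau)=0$, \Cref{lemmaEquivalenceClassMatrices} gives $\tau\sim\theta$ for some $\theta\in\mathcal{O}_{n}$, and by \Cref{propositionEquivalenceAntiautomorphisms} the existence of a $\tau$-homogeneous involution depends only on the $\operatorname{SL}_{2}(\mathbb{Z}_{n})$-orbit of $\tau$, so it suffices to build one for $\theta\in\{\theta_{1},\theta_{2},\theta_{3}\}$. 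For a candidate $(\theta,\lambda_{a},\lambda_{b})$ I would compute, using \eqref{sigmaRepresentant} and \Cref{remtg}, the scalars $\sigma(\theta(a),\theta(a))$, $\sigma(\theta(b),\theta(b))$ and the coefficients $\lambda_{\theta(a)},\lambda_{\theta(b)}$ in $\psi(X_{\theta(a)}),\psi(X_{\theta(b)})$; substituting these into \Cref{corollaryAntiautomorphism,corollaryInvolution} turns the requirement that $(\theta,\lambda_{a},\lambda_{b})$ be a homogeneous involution into concrete equations — for $\theta_{1}$ just $\lambda_{a}^{2}=1$ and $\lambda_{b}^{n}=1$; for $\theta_{2}$ (with $n$ even) just $\lambda_{a}\lambda_{b}=1$; for $\theta_{3}$ (with $4\mid n$) just $\lambda_{a}^{2}=-1$ and $\lambda_{b}^{n/2}=-1$ — each of which is solvable over the algebraically closed field $\F$, e.g.\ by $(\theta_{1},1,1)$, $(\theta_{2},1,1)$, $(\theta_{3},\varepsilon^{n/4},\varepsilon)$. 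This gives the existence clause.

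For the isomorphism classes I would fix $\theta\in\{\theta_{1},\theta_{2},\theta_{3}\}$ and apply \Cref{propIsomorphismAntiAuto}: $(\theta,\lambda_{a},\lambda_{b})\cong(\theta,\lambda_{a}',\lambda_{b}')$ exactly when $\lambda_{a}'=\chi\big((I-\theta)(a)\big)\lambda_{a}$ and $\lambda_{b}'=\chi\big((I-\theta)(b)\big)\lambda_{b}$ for a character $\chi$ of $T$. Working out $I-\theta$ and the values $\chi$ takes on its columns: for $\theta_{1}$ the class of $\lambda_{a}$ is rigid while $\lambda_{b}$ may be scaled by an arbitrary square of an $n$-th root of unity, so there is one isomorphism class when $n$ is odd (where moreover $\lambda_{a}^{2}=\lambda_{a}^{n}=1$ forces $\lambda_{a}=1$) and four when $n$ is even; for $\theta_{2}$ and $\theta_{3}$ the admissible pairs form a single orbit under the characters, so each yields one isomorphism class. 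This settles items (1), (2b), (2c), (3b), (3c), (3d).

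For the equivalence classes, note first that the cocycle compatibility condition in \Cref{propositionEquivalenceAntiautomorphisms} forces the accompanying automorphism $\phi$ of $T=\mathbb{Z}_{n}^{2}$ to preserve the symplectic bicharacter $\beta_{\sigma}$, i.e.\ $\phi\in\operatorname{SL}_{2}(\mathbb{Z}_{n})$; since every homogeneous involution is equivalent to one with $\tau\in\{\theta_{1},\theta_{2},\theta_{3}\}$ and, by the remark after \Cref{lemmaEquivalenceClassMatrices}, the $\theta_{i}$ are pairwise non-$\operatorname{SL}_{2}$-conjugate, an involution with $\tau=\theta_{i}$ is equivalent only to ones with $\tau=\theta_{i}$, and two such are equivalent iff joined by the action on $(\lambda_{a},\lambda_{b})$ of the centralizer $C_{i}=C_{\operatorname{SL}_{2}(\mathbb{Z}_{n})}(\theta_{i})$ (the $\chi$ of \Cref{propositionEquivalenceAntiautomorphisms} being pinned down by $\phi$ up to a character, whose effect is exactly the isomorphism relation). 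Hence the equivalence classes with $\tau\sim\theta_{i}$ are the $C_{i}$-orbits on the isomorphism classes. For $\theta_{2},\theta_{3}$ there is a single isomorphism class, hence a single equivalence class, giving (2c) and (3c,d). For $\theta_{1}$ I would first determine $C_{1}$: over the ring $\mathbb{Z}_{n}$ with $n$ even this is genuinely bigger than over a field, since centralizing $\operatorname{diag}(1,-1)$ only demands off-diagonal entries in $\{0,n/2\}$, and $C_{1}$ is generated by the $\operatorname{diag}(u,u^{-1})$ together with $\phi_{1}=\left(\begin{smallmatrix}1&0\\ n/2&1\end{smallmatrix}\right)$ and $\phi_{2}=\left(\begin{smallmatrix}1&n/2\\ 0&1\end{smallmatrix}\right)$; then I would compute the induced action via \Cref{propositionEquivalenceAntiautomorphisms} and find that $\operatorname{diag}(u,u^{-1})$ acts trivially on isomorphism classes, that $\phi_{1}$ swaps the classes of $(\lambda_{a},\lambda_{b})$ and $(-\lambda_{a},\lambda_{b})$ when $\lambda_{b}$ is a square and fixes the class otherwise, and that $\phi_{2}$ acts trivially when $4\mid n$ but, when $4\nmid n$, swaps the two $\lambda_{b}$-classes with $\lambda_{a}=1$ and fixes the rest. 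Tallying the orbits then produces two classes for $4\nmid n$ (representatives $(\theta_{1},1,1)$ and $(\theta_{1},-1,\varepsilon)$) and three for $4\mid n$ (the extra one being $(\theta_{1},1,\varepsilon)$), which together with the $\theta_{2}$- and $\theta_{3}$-representatives are exactly the lists in (2a) and (3a); item (1) then follows since for $n$ odd the only relevant form is $\theta_{1}$ and all admissible data is isomorphic to $(\theta_{1},1,1)$.

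The hard part will be this last step: pinning down $C_{\operatorname{SL}_{2}(\mathbb{Z}_{n})}(\theta_{1})$ over $\mathbb{Z}_{n}$ and tracking how its generators act on $(\lambda_{a},\lambda_{b})$ through the $2$-cochain $\chi$ of \Cref{propositionEquivalenceAntiautomorphisms}. That is where the dichotomy $4\mid n$ versus $4\nmid n$ first enters — it decides whether $\phi_{2}$ is trivial on classes and whether $-1$ is a square among the $n$-th roots of unity — and it requires careful bookkeeping of the cocycle factors appearing in expressions such as $X_{g}^{-1}$ and $(X_{2a-b})^{n/2}$; by contrast, the normal-form reduction, the isomorphism-class count, and the $\theta_{2},\theta_{3}$ cases should be straightforward once \Cref{lemmaEquivalenceClassMatrices,corollaryAntiautomorphism,corollaryInvolution,propIsomorphismAntiAuto} and \Cref{propositionEquivalenceAntiautomorphisms} are in hand.
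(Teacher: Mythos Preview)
Your reduction to the normal forms $\theta_{1},\theta_{2},\theta_{3}$, the existence argument, and the isomorphism-class counts via \Cref{propIsomorphismAntiAuto} are exactly what the paper does, and the equivalence analysis for $\theta_{2}$ and $\theta_{3}$ is also the same (a single isomorphism class, hence a single equivalence class).

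For the equivalence classes attached to $\theta_{1}$, however, you take a genuinely different route. You propose to compute the centraliser $C_{1}=C_{\operatorname{SL}_{2}(\mathbb{Z}_{n})}(\theta_{1})$ and determine how its generators act on the four isomorphism classes through the cochain~$\chi$ of \Cref{propositionEquivalenceAntiautomorphisms}. The paper avoids this entirely: it first reduces via \Cref{remarkMoreGenralCase} to the case $n=2^{\alpha}$; then it observes that among the four representatives $(\theta_{1},\pm 1,1)$, $(\theta_{1},\pm 1,\varepsilon)$ exactly $(\theta_{1},-1,\varepsilon)$ is symplectic while the other three are orthogonal, so the symplectic one is immediately isolated; it then exhibits a single explicit equivalence (using $\phi=\left(\begin{smallmatrix}1&0\\ n/2&1\end{smallmatrix}\right)$ and a concrete $\chi$) showing $(\theta_{1},-1,1)\sim(\theta_{1},1,1)$; and finally it proves $(\theta_{1},1,\varepsilon)\not\sim(\theta_{1},1,1)$ when $4\mid n$ by an elementary parity argument on the $\varepsilon$-exponents appearing in $\chi(b)/\chi(-b)$ and $\lambda'_{\phi(b)}$. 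The orthogonal/symplectic invariant is the shortcut you are missing---it replaces a chunk of your orbit computation for free. Conversely, your centraliser approach is more uniform and conceptually explains \emph{why} the $4\mid n$ versus $4\nmid n$ dichotomy appears (through the off-diagonal $n/2$ entries in $C_{1}$), whereas in the paper this enters only as a side effect of the parity argument. Both approaches are correct; the paper's is shorter, yours is more systematic but---as you correctly anticipate---makes you pay in cocycle bookkeeping.
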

    \begin{proof}
        As discussed before, by \Cref{lemmaEquivalenceClassMatrices}, it is enough to consider the case where $\tau\in\mathcal{O}_{n}$.
        
        Let $\tau=\theta_{1}$. Then, by \Cref{corollaryAntiautomorphism} and \Cref{corollaryInvolution}, $(\tau,\lambda_{a},\lambda_{b})$ is a $\tau$-homogeneous involution if and only if $\lambda_{a},\lambda_{b}$ are $n$-th roots of unity and $\lambda_{a}^{2}=1$. Let $\psi=(\tau,\lambda_{a},\lambda_{b})$ and $\psi'=(\tau,\lambda_{a}',\lambda_{b}')$ be two $\tau$-homogeneous involutions. Then, $\lambda_{a},\lambda_{a}'\in\{-1,1\}$, $\lambda_{b}= \varepsilon^{\beta}$ and $\lambda'_{b}= \varepsilon^{\beta'}$ for some $\beta,\beta'\geq 0$. By \Cref{propIsomorphismAntiAuto}, $\psi$ and $\psi'$ are isomorphic if and only if $\lambda_{a}=\lambda_{a}'$ and there exists $\chi\in \hat{T}$ such that $\varepsilon^{\beta'-\beta}=\chi(b)^{2}$. If $n$ is odd, we conclude that $\lambda_{a}=1=\lambda_{a}'$. Moreover, we can define the character $\chi$ of $T$ by $\chi(a)=1$ and $\chi(b)=\varepsilon^{\ell}$ for some $\ell$ satisfying $2\ell\equiv \beta'-\beta\,(\operatorname{mod}\, n)$. Then, in this case, $\psi$ and $\psi'$ are always isomorphic, since $\varepsilon^{\beta'-\beta}=\chi(b)^{2}$.
        
        Now, suppose that $n$ is even and $\lambda_{a}=\lambda_{a}'$. Then, we can define $\chi\in \hat{T}$ satisfying $\varepsilon^{\beta'-\beta}=\chi(b)^{2}$ if and only if $\beta'-\beta$ is even. Therefore, any $\tau$-homogeneous involution on $\cd$ is isomorphic to $(\tau,-1,1)$, $(\tau,-1,\varepsilon)$, $(\tau,1,1)$ or $(\tau,1,\varepsilon)$. These four involutions are pairwise non-isomorphic. However, we will show that they are not pairwise non-equivalent. Since $n$ is even, we can write $n=2^{\alpha}q$ where $q$ is odd. Then, $\mathbb{Z}_{n}\cong\mathbb{Z}_{2^{\alpha}}\times\mathbb{Z}_{q}$. Therefore, $\F^{\sigma}\mathbb{Z}_{n}\cong\F^{\sigma'}\mathbb{Z}_{2^{\alpha}}\otimes\F^{\sigma''}\mathbb{Z}_{q}$ where $\sigma'$ and $\sigma''$ are the restrictions of $\sigma$ to $\mathbb{Z}_{2^{\alpha}}$ and to $\mathbb{Z}_{q}$ respectively. Then, by \Cref{remarkMoreGenralCase}, it is enough to analyse the case where $n$ is a power of $2$.

        For $n = 2$, we have that $(\tau,-1,1)$, $(\tau,1,1)$ and $(\tau,1,\varepsilon)$ are pairwise equivalent and that they are not equivalent to $(\tau,-1,\varepsilon)$ (see \cite[Proposition 2.53]{EK2013}). Then, assume that $n=2^{\alpha}$ with $\alpha>1$. First, one can see that $(\tau,-1,1)$, $(\tau,1,1)$ and $(\tau,1,\varepsilon)$ are orthogonal while $(\tau,-1,\varepsilon)$ is symplectic. Therefore, the latter is not equivalent to any of the previous ones. Let $\psi = (\tau,-1,1)$ and $\psi'=(\tau,1,1)$. One can show that $\lambda_{\alpha a + \beta b}'=\varepsilon^{\alpha\beta}$. Define $\phi=
        \begin{pmatrix}
            1            & 0 \\
            \frac{
            n}{2}  & 1
        \end{pmatrix}$ and $\chi\colon T\to\F^{\times}$ being the map such that 
        \begin{align*}
            \chi(\alpha a+\beta b)=
            \begin{cases}
                \varepsilon^{\frac{n}{4}}, \text{ if $\alpha$ is odd;}\\
                1, \text{ if $\alpha$ is even.}
            \end{cases}
        \end{align*}
        Then, $\frac{\chi(g+h)}{\chi(g)\chi(h)}=\frac{\sigma(\phi(g),\phi(h))}{\sigma(g,h)}$ for all $g,h\in T$ and
        \begin{align*}
            \lambda_{a}=-1=\chi(a)\chi(\tau(a))^{-1}\lambda_{\phi(a)}';\\
            \lambda_{b}=1=\chi(b)\chi(\tau(b))^{-1}\lambda_{\phi(b)}'.
        \end{align*}
        Therefore, by \Cref{propositionEquivalenceAntiautomorphisms}, $\psi$ and $\psi'$ are equivalent.

        Now, we will show that $\psi=(\tau,1,\varepsilon)$ can not be equivalent to $\psi'=(\tau,1,1)$. In fact, if they were equivalent, by \Cref{propositionEquivalenceAntiautomorphisms} and \Cref{remarkAutomorphismEquivalenceParticularCase}, we should have a map $\chi\colon T\to \F^{\times}$ and a matrix $\phi\in \operatorname{SL}_{2}(\mathbb{Z}_{n})$ such that $\varepsilon=\lambda_{b}=\frac{\chi(b)}{\chi(-b)}\lambda_{\phi(b)}'$, $\frac{\chi(g+h)}{\chi(g)\chi(h)}=\frac{\sigma(\phi(g),\phi(h))}{\sigma(g,h)}$ for all $g,h\in T$ and $\phi\tau\phi^{-1}=\tau$. Then, $\phi$ should have the form
        $$\phi=
        \begin{pmatrix}
            x & y\\
            z & t
        \end{pmatrix}$$
        with $xt=1$ and $y,z\in \{0,\frac{n}{2}\}$. Since $\lambda_{\alpha a + \beta b}'=\varepsilon^{\alpha\beta}$, we would have that $\lambda_{\phi(b)}'$ should be an even power of $\varepsilon$. However, $\chi(b)\chi(-b)^{-1}$ is also an even power of $\varepsilon$. In fact, $\chi(b)$ is an $n$-th root of unity and
        \begin{align*}
            \frac{\chi(b)}{\chi(-b)}=\chi(2b)\frac{\sigma(\phi(2b),\phi(b))}{\sigma(2b,b)}=\chi(2b)=\chi(b)^{2}\frac{\sigma(\phi(b),\phi(b))}{\sigma(b,b)}= \pm\chi(b)^{2}.
        \end{align*}
        Therefore, the equation $\varepsilon=\frac{\chi(b)}{\chi(-b)}\lambda_{\phi(b)}'$ is not possible.

        Now, suppose that $n$ is even and let $\tau=\theta_{2}$. Then, by \Cref{corollaryAntiautomorphism} and \Cref{corollaryInvolution}, $(\tau,\lambda_{a},\lambda_{b})$ is a $\tau$-homogeneous involution if and only if $\lambda_{a}$ is an $n$-th root of unity and $\lambda_{b}=\lambda_{a}^{-1}$. Let $\psi=(\tau,\lambda_{a},\lambda_{a}^{-1})$ and $\psi'=(\tau,\lambda_{a}',\lambda_{a}'^{-1})$ be two $\tau$-homogeneous involutions. By \Cref{propIsomorphismAntiAuto}, $\psi$ and $\psi'$ are isomorphic if, and only, if there exists $\chi\in \hat{T}$ such that $\lambda_{a}'\lambda_{a}^{-1}=\chi(a)\chi(b)^{-1}$. Define the character $\chi$ of $T$ by $\chi(a)=\lambda_{a}'$ and $\chi(b)=\lambda_{a}$. Notice that $\lambda_{a}'\lambda_{a}^{-1}=\chi(a)\chi(b)^{-1}$. Then, $\psi$ and $\psi'$ are always isomorphic.

        Now, suppose that $4\mid n$ and let $\tau=\theta_{3}$. Then, by \Cref{corollaryAntiautomorphism} and \Cref{corollaryInvolution}, $(\tau,\lambda_{a},\lambda_{b})$ is a $\tau$-homogeneous involution if and only if $\lambda_{a}^{2}=-1=\lambda_{b}^{\frac{n}{2}}$. Let $\psi=(\tau,\lambda_{a},\lambda_{b})$ and $\psi'=(\tau,\lambda_{a}',\lambda_{b}')$ be two $\tau$-homogeneous involutions. Then, $\lambda_{a},\lambda_{a}'\in \{\varepsilon^{-\frac{n}{4}},\varepsilon^{\frac{n}{4}}\}$, $\lambda_{b}=\varepsilon^{2\beta+1}$ and $\lambda_{b}'=\varepsilon^{2\beta'+1}$ for some $\beta,\beta'\geq0$. By \Cref{propIsomorphismAntiAuto}, $\psi$ and $\psi'$ are isomorphic if, and only, if there exists $\chi\in\hat{T}$ such that $\chi(b)^{\frac{n}{2}}\lambda_{a}'=\lambda_{a}$ and $\chi(a)^{2}\lambda_{b}'=\chi(b)^{2}\lambda_{b}$. If $\lambda_{a}=\lambda_{a}'$, define $\chi\in\hat{T}$ by $\chi(a)=\varepsilon^{\beta-\beta'}$ and $\chi(b)=1$. If $\lambda_{a}=-\lambda_{a}'$, then define $\chi\in\hat{T}$ by $\chi(a)=\varepsilon^{1+\beta-\beta'}$ and $\chi(b)=\varepsilon$. In both cases, we conclude that $\chi(b)^{\frac{n}{2}}\lambda_{a}'=\lambda_{a}$ and $\chi(a)^{2}\lambda_{b}'=\chi(b)^{2}\lambda_{b}$. Therefore, all $\tau$-homogeneous involutions are isomorphic.
    \end{proof}
    
    \section{Homogeneous involutions on matrix algebras}
    In this section, we will be interested in $\tau$-homogeneous involutions on full matrix algebras over an algebraically closed field $\F$ of characteristic different from $2$ such that $\tau$ can be extended to an involution of $G$. The construction is similar to the ordinary case \cite{BK10,E10} (see also \cite[Section 2.4]{EK2013}) and of the degree-inverting case \cite{FSY22}. 

    Let $\mathcal{D}$ be a  $G$-graded-division algebra with support $T\subseteq G$ and let $0\neq V$ be a $G$-graded right $\mathcal{D}$-module finite-dimensional over $\cd$. The \textit{dual} of $V$ is the left $\mathcal{D}$-module $V^{\ast}=\mathrm{Hom}_{\mathcal{D}}(V,\mathcal{D})$ with the natural grading. For $f\in V^{\ast}$ and $v\in V$, we write $\langle f,v\rangle=f(v)$ to emphasize the duality between $V$ and $V^{\ast}$. Also, one has
    $$\deg\langle f,v\rangle=(\deg f)(\deg v)\quad \forall f \in V^{\ast},v \in V.$$

    Let $\mathcal{R}=\mathrm{End}_{\cd}(V)$. Then, $\mathcal{R}$ is a matrix algebra with entries in $\cd$ endowed with a $G$-grading. Notice that $V$ is a graded left $\mathcal{R}$-module with the natural action. Moreover, $V^{\ast}$ become a graded right $\mathcal{R}$-module with the action given by
    \begin{equation*}
        \langle fr,v\rangle=\langle f,rv\rangle,\quad f \in V^{\ast},\, r \in \mathcal{R},\, v \in V.
    \end{equation*}
    
    Now, let $0\neq W$ be a graded $\mathcal{R}$-submodule of $V^{\ast}$. Then, $W^{\perp}=\{v\in V\mid\,\forall f \in W,\, \langle f,v\rangle=0\}$ is a proper graded $\mathcal{R}$-submodule of $V$. Thus, $W^{\perp}=0$, which implies $W=V^{\ast}$. Therefore, $V^{\ast}$ is a graded-simple right $\mathcal{R}$-module.

    Assume that $\tau\colon G\to G$ is a bijection and $\psi$ is a $\tau$-homogeneous anti-automorphism of $\mathcal{R}$. Then, $V^{\ast}$ becomes a left $\mathcal{R}$-module by
    $$r\cdot f=f\psi(r),\quad f \in V^{\ast},\, r \in \mathcal{R}.$$
    In fact, $V^{\ast}$ with this action is a graded left $\mathcal{R}$-module if and only if $\tau$ is the identity map and $G$ is abelian. However, the following definition and lemma show how the structure of a left $\mathcal{R}$-module in $V^{\ast}$ interacts with the grading.
    
    \begin{definition}
        We say that a left $\mathcal{R}$-module $V$ is $\tau$-\emph{graded} if
        $$\mathcal{R}_{g}V_{s}\subseteq V_{s\tau(g)},\quad \forall g,s \in G.$$
    \end{definition}
    
    \begin{lemma}
        $V^{\ast}$ is a left $\tau$-graded $\mathcal{R}$-module with the action
        $$r \cdot f=f\psi(r),\quad f \in V^{\ast},\, r \in \mathcal{R}.$$
    \end{lemma}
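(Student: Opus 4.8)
The plan is to reduce the statement to the graded \emph{right} $\mathcal{R}$-module structure on $V^{\ast}$ established just above, namely $V^{\ast}_{s}\mathcal{R}_{h}\subseteq V^{\ast}_{sh}$ for all $s,h\in G$, and then transport this through the anti-automorphism $\psi$. First I would note that $r\cdot f=f\psi(r)$ does define a left $\mathcal{R}$-module structure on $V^{\ast}$: since $\psi$ is a unital anti-automorphism, $\psi(1)=1$ gives $1\cdot f=f$, the identity $\psi(rr')=\psi(r')\psi(r)$ gives $(rr')\cdot f=f\psi(r')\psi(r)=r\cdot(r'\cdot f)$, and bi-additivity is immediate from $\F$-linearity of $\psi$ and of the right action. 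As the surrounding text already observes this, I would keep this step to a sentence.

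The substance of the lemma is the $\tau$-graded condition $\mathcal{R}_{g}V^{\ast}_{s}\subseteq V^{\ast}_{s\tau(g)}$ for all $g,s\in G$. By $\F$-linearity it suffices to verify it on homogeneous elements, so I would fix $g,s\in G$, $r\in\mathcal{R}_{g}$, $f\in V^{\ast}_{s}$. If $\mathcal{R}_{g}=0$ there is nothing to check; otherwise $g\in\mathrm{Supp}\,\mathcal{R}$, and the $\tau$-homogeneity of $\psi$ gives $\psi(r)\in\psi(\mathcal{R}_{g})=\mathcal{R}_{\tau(g)}$. Applying the graded right $\mathcal{R}$-action then yields
$$r\cdot f=f\psi(r)\in V^{\ast}_{s}\,\mathcal{R}_{\tau(g)}\subseteq V^{\ast}_{s\tau(g)},$$
which is exactly the required inclusion; summing over homogeneous components completes the argument.

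I do not expect a genuine obstacle here: the only point that needs care is the bookkeeping of sides. The right $\mathcal{R}$-action appends degrees on the right, and $\psi$ converts the degree $g$ of $r$ into $\tau(g)$, which is precisely why $\tau(g)$ lands to the right of $s$ rather than to the left — this is exactly what the definition of a $\tau$-graded left module is designed to encode, and also why the naive notion of a graded left module fails as soon as $\psi\neq\mathrm{id}$ or $G$ is non-abelian. The real work was done earlier, in equipping $V^{\ast}=\mathrm{Hom}_{\mathcal{D}}(V,\mathcal{D})$ with its natural grading and in checking that $\langle fr,v\rangle=\langle f,rv\rangle$ makes $V^{\ast}$ a graded right $\mathcal{R}$-module.
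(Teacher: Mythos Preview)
Your proof is correct and follows essentially the same idea as the paper: both reduce to the observation that $\psi(r)\in\mathcal{R}_{\tau(g)}$ and then compute $\deg(f\psi(r))=s\tau(g)$. The only cosmetic difference is that you invoke the already-established graded right $\mathcal{R}$-module structure on $V^{\ast}$, whereas the paper recomputes the degree directly via the pairing $\langle r\cdot f,v\rangle=\langle f,\psi(r)v\rangle$ with a test vector $v\in V_{h}$.
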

    \begin{proof}
        Let $r \in \mathcal{R}_{g},\, f \in V^{\ast}_{s},\, v \in V_{h}$. Then
        \begin{align*}
            \deg(r\cdot f)h=\deg\langle r\cdot f,v\rangle=\deg \langle f\psi(r),v\rangle= \deg \langle f,\psi(r)v\rangle = s\tau(g)h.
        \end{align*}
        Therefore, $\mathcal{R}_{g}V^{\ast}_{s}\subseteq R_{s\tau(g)}$.
    \end{proof}

    Given a $G$-graded vector space $W=\bigoplus _{g\in G}W_{g}$, we define $W^{\tau}=\bigoplus _{g\in G}W^{\tau}_{g}$ where $W^{\tau}_{g}= W_{\tau(g)}$ for all $g \in G$.

    \begin{lemma}
        \label{antiautoRemark}
        Let $\tau\colon G\to G$ be an anti-automorphism of $G$ and let $W=\bigoplus _{g\in G}W_{g}$ be a $G$-graded vector space. Then, $W$ is a $\tau$-graded left $\mathcal{R}$-module if and only if $W^{\tau}$ is a graded left $\mathcal{R}$-module.
    \end{lemma}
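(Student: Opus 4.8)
The plan is to observe that $W$ and $W^{\tau}$ are literally the same underlying vector space carrying the same (ungraded) left $\mathcal{R}$-module structure, the passage from $W$ to $W^{\tau}$ only relabelling the homogeneous components via $W^{\tau}_{g}=W_{\tau(g)}$. In particular, $W$ is a left $\mathcal{R}$-module if and only if $W^{\tau}$ is, so the lemma reduces to checking that the grading-compatibility axiom ``$\mathcal{R}_{g}W^{\tau}_{s}\subseteq W^{\tau}_{gs}$ for all $g,s\in G$'' (which, together with the module structure, is the definition of a graded left $\mathcal{R}$-module) is equivalent to the axiom ``$\mathcal{R}_{g}W_{s}\subseteq W_{s\tau(g)}$ for all $g,s\in G$'' (which is the definition of a $\tau$-graded left $\mathcal{R}$-module). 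No module-theoretic content beyond this bookkeeping is needed.

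First I would rewrite the defining inclusion for $W^{\tau}$ in terms of the components of $W$: using $W^{\tau}_{s}=W_{\tau(s)}$ and $W^{\tau}_{gs}=W_{\tau(gs)}$, and then invoking the hypothesis that $\tau$ is an anti-automorphism of $G$, so that $\tau(gs)=\tau(s)\tau(g)$, the inclusion $\mathcal{R}_{g}W^{\tau}_{s}\subseteq W^{\tau}_{gs}$ becomes
$$\mathcal{R}_{g}W_{\tau(s)}\subseteq W_{\tau(s)\tau(g)},\qquad \forall\, g,s\in G.$$
Next, since $\tau$ is a bijection of $G$, as $s$ runs over $G$ so does $t=\tau(s)$; substituting, the displayed family of inclusions is the same as $\mathcal{R}_{g}W_{t}\subseteq W_{t\tau(g)}$ for all $g,t\in G$, which is exactly the condition that $W$ be a $\tau$-graded left $\mathcal{R}$-module. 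Reading this chain of equivalences in the reverse direction yields the converse implication, so the argument is symmetric in the two directions and no separate work is required for ``only if'' versus ``if''.

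The two points that require attention — and they are places where a careless argument would go wrong, rather than genuine obstacles — are the use of the anti-homomorphism identity $\tau(gs)=\tau(s)\tau(g)$ (the order reversal is what converts $\tau$-gradedness into ordinary gradedness; for an ordinary automorphism one would instead land on the usual notion of graded module) and the use of the surjectivity of $\tau$ to perform the reindexing $s\mapsto\tau(s)$ that turns the condition for $W^{\tau}$ into the condition for $W$. I do not anticipate any real difficulty: the lemma is essentially a definitional translation, its purpose being to set up the dictionary between $\tau$-graded $\mathcal{R}$-modules and genuine graded $\mathcal{R}$-modules that will be exploited afterwards when analysing $V^{\ast}$ under a $\tau$-homogeneous anti-automorphism of $\mathcal{R}$.
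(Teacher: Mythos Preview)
Your proposal is correct and follows essentially the same approach as the paper's proof: both unwind the definition $W^{\tau}_{s}=W_{\tau(s)}$, use the anti-automorphism identity $\tau(gs)=\tau(s)\tau(g)$, and reindex via the bijectivity of $\tau$. The only cosmetic difference is that the paper writes out the two implications separately (using $\tau^{-1}$ explicitly for the converse) while you handle both at once by phrasing the reindexing as an equivalence.
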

    \begin{proof}
        If $W$ is a $\tau$-graded left $\mathcal{R}$-module, then:
        \begin{equation*}
            \mathcal{R}_{g}W^{\tau}_{s}=\mathcal{R}_{g}W_{\tau(s)}\subseteq W_{\tau(s)\tau(g)}= W_{\tau(gs)}=W^{\tau}_{gs}.
        \end{equation*}
        Conversely, if $W^{\tau}$ is a graded left $\mathcal{R}$-module,
        \begin{equation*}
            \mathcal{R}_{g}W_{s}=\mathcal{R}_{g}W^{\tau}_{\tau^{-1}(s)}\subseteq W^{\tau}_{g\tau^{-1}(s)}= W_{\tau(g\tau^{-1}(s))}= W_{s\tau(g)}.
        \end{equation*}
    \end{proof}

    \begin{lemma}
        \label{VastGradedSimple}
        Let $\tau\colon G\to G$ be an anti-automorphism. Then, $(V^{\ast})^{\tau}$ is a graded-simple left $\mathcal{R}$-module.
    \end{lemma}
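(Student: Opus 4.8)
The plan is to reduce the statement to the already established fact that $V^{\ast}$ is a graded-simple \emph{right} $\mathcal{R}$-module. First I would invoke \Cref{antiautoRemark} to know that $(V^{\ast})^{\tau}$ is at least a graded left $\mathcal{R}$-module with the action $r\cdot f = f\psi(r)$, so only the simplicity needs to be checked. The two elementary observations that do the work are: (i) because $\psi$ is a \emph{bijective} anti-automorphism of $\mathcal{R}$, a subspace $U\subseteq V^{\ast}$ satisfies $\mathcal{R}\cdot U\subseteq U$ if and only if $U\mathcal{R}\subseteq U$, since $\mathcal{R}\cdot U = U\psi(\mathcal{R}) = U\mathcal{R}$; and (ii) because $\tau$ is a bijection of $G$, a subspace $U$ is graded with respect to the grading of $V^{\ast}$ if and only if it is graded with respect to the grading of $(V^{\ast})^{\tau}$, as the decompositions $U=\bigoplus_{g}(U\cap V^{\ast}_{g})$ and $U=\bigoplus_{g}(U\cap V^{\ast}_{\tau(g)})$ range over the same family of subspaces.

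Then I would argue as follows. First, $\mathcal{R}\cdot (V^{\ast})^{\tau} = V^{\ast}\psi(\mathcal{R}) = V^{\ast}\mathcal{R}\neq 0$, since $V^{\ast}$ is graded-simple as a right $\mathcal{R}$-module; so $(V^{\ast})^{\tau}$ is not a trivial module. Next, let $U$ be a nonzero graded left $\mathcal{R}$-submodule of $(V^{\ast})^{\tau}$. By (ii), $U$ is a graded subspace of $V^{\ast}$, and by (i), $U$ is a right $\mathcal{R}$-submodule of $V^{\ast}$; hence $U$ is a nonzero graded right $\mathcal{R}$-submodule of $V^{\ast}$. Since $V^{\ast}$ is graded-simple as a right $\mathcal{R}$-module, $U = V^{\ast} = (V^{\ast})^{\tau}$. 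Therefore the only graded left $\mathcal{R}$-submodules of $(V^{\ast})^{\tau}$ are $0$ and $(V^{\ast})^{\tau}$, which is exactly graded-simplicity.

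I do not expect a serious obstacle here: the substantive content — that $V^{\ast}$ is graded-simple as a right module — is already in the excerpt (it follows from $V$ being graded-simple as a left $\mathcal{R}$-module via the orthogonality argument $W^{\perp}=0$), and the passage to the left, $\tau$-twisted picture only amounts to relabelling the grading by $\tau$ and the action by $\psi$. The one point requiring a little care is to keep the two bijections $\tau$ (on $G$) and $\psi$ (on $\mathcal{R}$) separate and to make sure that ``graded submodule'' is read consistently on both sides; once that is set up, observations (i) and (ii) make the reduction immediate.
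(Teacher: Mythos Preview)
Your proof is correct and is precisely the argument the paper intends: the paper's own proof is the one-line ``It follows from \Cref{antiautoRemark} and from the fact that $V^{\ast}$ is a graded-simple right $\mathcal{R}$-module,'' and your observations (i) and (ii) simply unpack why those two ingredients suffice. There is nothing to add or correct.
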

    \begin{proof}
        It follows from \Cref{antiautoRemark} and from the fact that $V^{\ast}$ is a graded-simple right $\mathcal{R}$-module.
    \end{proof}

    By \Cref{VastGradedSimple} and \cite[Lemma 2.7]{EK2013}, we have the following lemma.

    \begin{lemma}
        \label{phi1}
        Let $\tau\colon G\to G$ be an anti-automorphism. There exists an element $g_{0}\in G$ and an $\mathcal{R}$-isomorphism $\varphi_{1}\colon V^{[g_{0}]}\to V^{\ast}$ satisfying
        $$\deg\varphi_{1}(v)=\tau(\deg v),\quad v\in V^{[g_{0}]}.$$
        Equivalently, $\varphi_{1}\colon V^{[g_{0}]}\to (V^{\ast})^{\tau}$ is a graded $\mathcal{R}$-isomorphism.\qed
    \end{lemma}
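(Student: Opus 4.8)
The plan is to deduce the lemma by combining \Cref{VastGradedSimple} with the graded analogue of the classical fact that a simple Artinian algebra has, up to isomorphism, a unique simple module — this is precisely \cite[Lemma 2.7]{EK2013}. Recall that $\mathcal{R}=\mathrm{End}_{\cd}(V)$, so $V$ is itself a graded left $\mathcal{R}$-module, and in fact a graded-simple one: since $V$ is finite-dimensional over $\cd$, any nonzero homogeneous $v\in V$ extends to a homogeneous $\cd$-basis, so $\mathcal{R}v=V$; hence $V$ has no proper nonzero graded $\mathcal{R}$-submodule. On the other hand, $(V^{\ast})^{\tau}$ is a graded-simple left $\mathcal{R}$-module by \Cref{VastGradedSimple}. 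Applying \cite[Lemma 2.7]{EK2013} to the two graded-simple left $\mathcal{R}$-modules $V$ and $(V^{\ast})^{\tau}$ produces an element $g_{0}\in G$ together with a graded $\mathcal{R}$-isomorphism $\varphi_{1}\colon V^{[g_{0}]}\to (V^{\ast})^{\tau}$.

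It then remains only to translate this into the degree condition in the statement. By the definition of the shift, a homogeneous element $v\in V^{[g_{0}]}$ of degree $g$ is just a homogeneous element sitting in a prescribed component of $V$, and by the definition of $W^{\tau}$ one has $((V^{\ast})^{\tau})_{g}=(V^{\ast})_{\tau(g)}$. Since $\varphi_{1}$ is graded, $\varphi_{1}(v)\in (V^{\ast})_{\tau(g)}$, i.e.\ $\deg\varphi_{1}(v)=\tau(g)=\tau(\deg v)$, the degree of $v$ being taken in $V^{[g_{0}]}$. Conversely, any $\mathcal{R}$-isomorphism $V^{[g_{0}]}\to V^{\ast}$ with $\deg\varphi_{1}(v)=\tau(\deg v)$ is, by the same bookkeeping, exactly a graded $\mathcal{R}$-isomorphism onto $(V^{\ast})^{\tau}$; so the two formulations in the lemma are equivalent, and the proof is complete.

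The one point deserving care — and the main thing to verify — is that the hypotheses of \cite[Lemma 2.7]{EK2013} genuinely apply: one needs $V$ finite-dimensional over $\cd$ (assumed), so that $\mathcal{R}$ is graded-simple Artinian and $V$ is its unique graded-simple left module up to isomorphism and shift; and one must use the shift convention for $V^{[g_{0}]}$ under which \cite[Lemma 2.7]{EK2013} is stated, so that the degree computation comes out precisely as $\deg\varphi_{1}(v)=\tau(\deg v)$ and not with a spurious extra factor of $g_{0}^{\pm 1}$. Given the setup of this section, both are routine, so nothing beyond invoking \Cref{VastGradedSimple} and \cite[Lemma 2.7]{EK2013} is required.
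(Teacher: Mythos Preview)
Your proof is correct and follows exactly the approach indicated in the paper: invoke \Cref{VastGradedSimple} to see that $(V^{\ast})^{\tau}$ is graded-simple, then apply \cite[Lemma 2.7]{EK2013} to compare it with the graded-simple module $V$. The paper simply cites these two ingredients without elaboration, whereas you have spelled out the verification that $V$ is graded-simple and the translation between the two formulations of the conclusion.
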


    From now on, suppose that $\tau$ is an anti-automorphism of $G$ and fix $g_{0}\in G$ and $\varphi_{1}$ as in \Cref{phi1}.

    \begin{lemma}
        There exists a homogeneous anti-automorphism $\psi_{0}\colon \mathcal{D}\to \mathcal{D}$ such that
        $$\varphi_{1}(vd)=\psi_{0}(d)\varphi_{1}(v).$$
        Furthermore, $\deg\psi_{0}(d)=\tau(g_{0})\tau(\deg d)\tau(g_{0})^{-1}$.
    \end{lemma}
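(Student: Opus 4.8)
The plan is to extract $\psi_0$ from the interplay between the anti-automorphism $\psi$ on $\mathcal{R}$ and the $\mathcal{R}$-isomorphism $\varphi_1\colon V^{[g_0]}\to V^{\ast}$. First I would recall that $V^{\ast}=\mathrm{Hom}_{\mathcal{D}}(V,\mathcal{D})$ carries a natural left $\mathcal{D}$-module structure, so for $d\in\mathcal{D}$ homogeneous and $f\in V^{\ast}$ the element $df$ makes sense, with $(df)(v)=d\,f(v)$. On $V^{[g_0]}$ (which is just $V$ with a shifted grading) the right $\mathcal{D}$-action is unchanged, so $vd$ is defined for $v\in V$, $d\in\mathcal{D}$. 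The map $\varphi_1$ is $\mathcal{R}$-linear but need not be $\mathcal{D}$-linear; the point is to measure its failure to be $\mathcal{D}$-linear. Concretely, for each homogeneous $d\in\mathcal{D}$ consider the assignment $v\mapsto \varphi_1(vd)$; I claim there is a unique homogeneous element, which we will call $\psi_0(d)$, such that $\varphi_1(vd)=\psi_0(d)\varphi_1(v)$ for all $v$.

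The key step is to produce $\psi_0(d)$ and check it lies in $\mathcal{D}$. Since $V$ is a graded-simple right $\mathcal{D}$-module and $\mathcal{R}=\mathrm{End}_{\mathcal{D}}(V)$, right multiplication by a homogeneous $d\in\mathcal{D}$ is an element of $\mathrm{End}_{\mathcal{D}}(V)^{\mathrm{op}}$, i.e.\ it commutes with every $r\in\mathcal{R}$. Hence the composite $v\mapsto\varphi_1(vd)$ is an $\mathcal{R}$-module homomorphism $V^{[g_0]}\to V^{\ast}$: indeed $\varphi_1((rv)d)=\varphi_1(r(vd))=r\cdot\varphi_1(vd)$ using $\mathcal{R}$-linearity of $\varphi_1$ and the fact that the right $\mathcal{D}$-action commutes with the left $\mathcal{R}$-action. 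Therefore $\varphi_1^{-1}\circ(\text{this map})$ is an $\mathcal{R}$-endomorphism of $V^{[g_0]}$; but $\mathrm{End}_{\mathcal{R}}(V)\cong\mathcal{D}^{\mathrm{op}}$ (acting on the right), so this endomorphism is right multiplication by some homogeneous element of $\mathcal{D}$. Transferring back through $\varphi_1$ and recalling that the left $\mathcal{D}$-action on $V^{\ast}$ corresponds under $\varphi_1$ to the right $\mathcal{D}$-action on $V$ composed with the anti-automorphism, this yields $\varphi_1(vd)=\psi_0(d)\varphi_1(v)$ with $\psi_0(d)\in\mathcal{D}$ homogeneous. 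Linearity of $d\mapsto\psi_0(d)$ is immediate; that $\psi_0$ reverses products follows from $\varphi_1(v(d_1 d_2))=\psi_0(d_1 d_2)\varphi_1(v)$ compared with $\varphi_1((vd_1)d_2)=\psi_0(d_2)\varphi_1(vd_1)=\psi_0(d_2)\psi_0(d_1)\varphi_1(v)$ and faithfulness of the action; bijectivity follows from invertibility of homogeneous elements in the graded-division algebra $\mathcal{D}$.

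For the degree formula I would track degrees through the identity $\varphi_1(vd)=\psi_0(d)\varphi_1(v)$. Take $v\in V_h$ and $d\in\mathcal{D}_t$, so $vd\in V_{ht}$ and, by the defining property of $\varphi_1$ in \Cref{phi1}, $\varphi_1(vd)$ has degree $\tau(ht)$, while $\varphi_1(v)$ has degree $\tau(h)$. Since $V^{\ast}$ is a graded left $\mathcal{D}$-module with $\mathcal{D}_s V^{\ast}_p\subseteq V^{\ast}_{sp}$ (this uses that the duality pairing satisfies $\deg\langle f,v\rangle=(\deg f)(\deg v)$ and the left action $(df)(v)=d f(v)$), the element $\psi_0(d)$ must have degree $(\deg\psi_0(d))=\tau(ht)\tau(h)^{-1}$. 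Now $\tau$ is an anti-automorphism of $G$, so $\tau(ht)=\tau(t)\tau(h)$, giving naively $\deg\psi_0(d)=\tau(t)$ — but that is the degree as an element acting on $V^{\ast}$, and one must convert to the degree inside $\mathcal{D}$ via the identification $\varphi_1\colon V^{[g_0]}\to V^{\ast}$, which shifts by $g_0$ on the source side and by $\tau(g_0)$ on the target side. Carefully composing these shifts (the grading on $V^{[g_0]}$ is $V^{[g_0]}_h=V_{hg_0^{-1}}$ or the analogous convention used in \cite[Lemma 2.7]{EK2013}) produces the stated conjugated formula $\deg\psi_0(d)=\tau(g_0)\tau(\deg d)\tau(g_0)^{-1}$. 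The main obstacle is bookkeeping: getting the left-versus-right conventions for the $\mathcal{D}$-actions on $V$, $V^{[g_0]}$ and $V^{\ast}$ consistent, and handling the degree-shift by $g_0$ on both sides so that the conjugation by $\tau(g_0)$ emerges correctly rather than a spurious left or right translate.
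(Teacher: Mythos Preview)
Your overall strategy is the same as the paper's, but the construction of $\psi_0$ as written is circular. You form the $\mathcal{R}$-homomorphism $\alpha_d\colon v\mapsto\varphi_1(vd)$ and then take $\varphi_1^{-1}\circ\alpha_d$, which is an $\mathcal{R}$-endomorphism of $V$. But $\varphi_1^{-1}\circ\alpha_d=\varphi_1^{-1}\circ\varphi_1\circ R_d=R_d$: you simply recover right multiplication by $d$, and no new element of $\mathcal{D}$ appears. To escape this you invoke that ``the left $\mathcal{D}$-action on $V^{\ast}$ corresponds under $\varphi_1$ to the right $\mathcal{D}$-action on $V$ composed with the anti-automorphism'' --- but that sentence \emph{is} the identity $\varphi_1(vd)=\psi_0(d)\varphi_1(v)$ you are trying to establish, so the argument assumes its conclusion.

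The fix is to compose on the other side. The map $\alpha_d\circ\varphi_1^{-1}\colon V^{\ast}\to V^{\ast}$ is an $\mathcal{R}$-endomorphism of $V^{\ast}$, and here $\mathrm{End}_{\mathcal{R}}^{gr}(V^{\ast})\cong\mathcal{D}$ via \emph{left} multiplication (this needs a word of justification: $V^{\ast}$ is graded-simple over $\mathcal{R}$, and $d\mapsto L_d$ embeds $\mathcal{D}$ into $\mathrm{End}_{\mathcal{R}}^{gr}(V^{\ast})$). Hence $\alpha_d\circ\varphi_1^{-1}=L_{d'}$ for a unique homogeneous $d'\in\mathcal{D}$, and setting $\psi_0(d)=d'$ gives $\varphi_1(vd)=\psi_0(d)\varphi_1(v)$ as required. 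This is exactly what the paper does, phrased as the equality of the two sets $\{\varphi_1\circ R_d\}$ and $\{L_{d'}\circ\varphi_1\}$ inside the set of graded $\mathcal{R}$-isomorphisms $V^{[g]}\to(V^{\ast})^{\tau}$.

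Your degree computation is correct in intent: the naive calculation that ignores the shift gives $\deg\psi_0(d)=\tau(\deg d)$, and the conjugation by $\tau(g_0)$ appears precisely because $\deg\varphi_1(v)=\tau\bigl((\deg_V v)\,g_0\bigr)$ rather than $\tau(\deg_V v)$. Carrying the $g_0$ through from the start (as the paper does) avoids having to patch this in at the end.
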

    \begin{proof}

        
        We will first prove that the following sets coincide:
        \begin{align*}
            S_{1}&=\{\varphi\colon V^{[g]}\to (V^{\ast})^{\tau} \text{ graded $\mathcal{R}$-isomorphism for some $g \in G$}\},\\
            S_{2}&=\{\varphi_{1}\circ R_{d}\mid 0\neq d \in \mathcal{D}^{\times} \text{ homogeneous}\},\\
            S_{3}&=\{L_{d}\circ\varphi_{1}\mid 0\neq d \in \mathcal{D}^{\times} \text{ homogeneous}\},
        \end{align*}
        where $R_{d}\colon V\to V$ and $L_{d}\colon V^{\ast}\to V^{\ast}$ 
  denote right and left multiplication by $d$, respectively.
        
        Note that $S_{2},S_{3}\subseteq S_{1}$. Now, given $\varphi \in S_{1}$, we have that $\varphi_{1}^{-1}\circ\varphi \in \mathrm{End}_{\mathcal{R}}^{gr}(V) \cong \mathcal{D}$ is a graded map. Thus, there exists $d \in \mathcal{D}$ homogeneous such that $\varphi=\varphi_{1}\circ R_{d}\in S_{2}$. Analogously, $\varphi\circ \varphi_{1}^{-1} \in \mathrm{End}_{\mathcal{R}}^{gr}(V^{\ast}) \cong \mathcal{D}$ is a graded map. Then, we can find $d \in \mathcal{D}$ homogeneous such that $\varphi=L_{d}\circ \varphi_{1}\in S_{3}$.

        Since $S_{2}=S_{3}$, we can define $\psi_{0}\colon \mathcal{D}\to\mathcal{D}$ a linear isomorphism such that
        \begin{equation}
            \label{eqpsi0}
            \varphi_{1}\circ R_{d}=L_{\psi_{0}(d)}\circ\varphi_{1}
        \end{equation} 
        for any homogeneous element $d \in \mathcal{D}$. Moreover, by \eqref{eqpsi0} we have that $\psi_{0}$ is an anti-automorphism and $\varphi_{1}(vd)=\psi_{0}(d)\varphi_{1}(v)$, for all $v \in V$. Thus,
        $$\tau(\deg(v)\deg(d)g_{0})=\deg\psi_{0}(d)\tau(\deg(v)g_{0})\quad\forall v \in V,\,\forall d\in \mathcal{D}.$$
        Therefore, $\deg\psi_{0}(d)=\tau(g_{0})\tau(\deg d)\tau(g_{0})^{-1}$.
    \end{proof}
    
Defining $B \colon V \times V \to \cd$, as
$$B (v,w) = \langle \varphi_1(v), w \rangle,$$
we have a non-degenerate $\F$-bilinear form which satisfies the following properties:
\begin{itemize}
    \item[(\textrm{i})] $\deg B(v,w) = \tau (g_0) \tau (\deg v) \deg w$, for all homogeneous $v,w \in V$;
    \item[(\textrm{ii})] $B (vd,w) = \psi _0(d) B(v,w)$ and $B(v,wd) = B(v,w)d$, for every $v,w \in V$ and $d \in \cd$;
    \item[(\textrm{iii})] $B(rv,w) = B(v, \psi(r)w)$, for all $v, w \in V$ and $r \in \mcr$.
\end{itemize}

Now, let $\{ w_1, \ldots , w_n\}$ be a homogeneous $\cd$-basis of $V$. Denote by $\Phi$ the matrix of $B$, i.e., the matrix with entries $x_{ij} = B(w_i,w_j)$. Given $r \in \mcr$, let $R = (r_{ij})$ be its matrix form, and $\psi (R) = (r'_{ij})$ be the matrix form of $\psi(r)$. Then,
$$B(rw_k,w_\ell) = B \left( \displaystyle \sum _{i=1}^n w_ir_{ik}, w_\ell\right)= \sum _{i=1}^n \psi_0(r_{ik})x_{i\ell},$$
and
$$B(w_k,\psi(r)w_\ell) = B \left( w_k,  \displaystyle \sum _{i=1}^n w_ir'_{i\ell}\right)=  \sum _{i=1}^n x_{ki}r'_{i\ell}.$$

Hence, we obtain $\psi_0(R)^t \Phi = \Phi R$. Then, identifying $\mcr=M_{n}(\cd)$ and defining $\psi_0(X)  = (\psi_0(x_{ij}))$, we have
\begin{equation} \label{psimatrix}
    \psi: X \in \mcr \mapsto \Phi ^{-1}\psi _0(X^t)\Phi \in \mcr,
\end{equation}
where $^t$ is the usual transpose involution on $M_{n}(\cd)$. Assuming that $\cd$ is a matrix algebra and $\psi_{0}$ is the usual matrix transposition on $\cd$, we can get rid of $\psi_{0}$ in \eqref{psimatrix}. In this case, $^{t}$ should be understood as the usual transposition on $M_{n}(\F)\otimes\cd$.

This shows that, given a pair $(B, \psi_0)$ satisfying $\mathrm{(i)-(iii)}$, we can recover uniquely the anti-automorphism $\psi$.

\begin{proposition}
  Let $G$ be a group, $\cd$ a graded-division algebra, $V$ a graded right $\cd$-module finite-dimensional over $\cd$ and $\mcr = \operatorname{End}_\cd (V)$. 
  Assume that $\psi$ is a $\tau$-homogeneous anti-automorphism on $\mcr$, where $\tau : G \to G$ is an anti-automorphism. Then, there exist $g_0 \in G$, an anti-automorphism $\psi_0$ on $\cd$ satisfying $\deg \psi_0(d) = \tau(g_0)\tau(\deg d)\tau(g_0)^{-1}$ for all homogeneous $d \in \cd$,
  and a non-degenerate form $B: V \times V \to \cd$ satisfying $\mathrm{(i)-(iii)}$. If $(\psi_0', B')$ is another such pair, then there exists a nonzero homogeneous $d \in \cd$ such that $B' = dB$ and $\psi_0'(x) = d \psi _0(x)d^{-1}$, for every $x \in \cd$.

  Conversely, given a pair $(\psi_0, B)$ satisfying $\mathrm{(i)-(iii)}$, there exists a $\tau$-homogeneous anti-automorphism on $\mcr$.\qed
\end{proposition}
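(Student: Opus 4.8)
The plan is to handle the three assertions in turn. The \emph{existence} of a triple $(g_0,\psi_0,B)$ is exactly what the discussion preceding the statement produces: one takes $g_0$ and the graded $\mcr$-isomorphism $\varphi_1\colon V^{[g_0]}\to(V^{\ast})^{\tau}$ from \Cref{phi1}, lets $\psi_0$ be the anti-automorphism determined by $\varphi_1(vd)=\psi_0(d)\varphi_1(v)$ (with the stated degree formula), and puts $B(v,w)=\langle\varphi_1(v),w\rangle$; then (i)--(iii) are the verifications recorded there. So what really needs a proof is the uniqueness clause and the converse.

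For uniqueness, take any two pairs $(\psi_0,B)$, $(\psi_0',B')$ satisfying (i)--(iii) relative to the given $\psi$, and attach to each the map $\varphi_1\colon v\mapsto B(v,\cdot)$, $\varphi_1'\colon v\mapsto B'(v,\cdot)$. Property (ii) makes these land in $V^{\ast}$, non-degeneracy makes them bijective, property (iii) makes them left $\mcr$-module homomorphisms for the action $r\cdot f=f\psi(r)$, and property (i) makes them graded after the appropriate shift; hence $\varphi_1,\varphi_1'$ both lie in the set $S_1$ of graded $\mcr$-isomorphisms $V^{[g]}\to(V^{\ast})^{\tau}$ ($g\in G$ arbitrary), which in the proof of the lemma producing $\psi_0$ was shown to be a single orbit under right multiplication by nonzero homogeneous elements of $\cd$ (the identity $S_1=S_2=S_3$ there). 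Thus $\varphi_1'=\varphi_1\circ R_d$ for some nonzero homogeneous $d$, so for all $v,w\in V$,
\[
B'(v,w)=\langle\varphi_1(vd),w\rangle=\psi_0(d)\langle\varphi_1(v),w\rangle=\psi_0(d)B(v,w),
\]
and, writing $d_0=\psi_0(d)$ (again a nonzero homogeneous element), we get $B'=d_0B$. Comparing property (ii) for $B'$ and for $B$, namely $\psi_0'(x)B'(v,w)=B'(vx,w)=d_0\psi_0(x)B(v,w)=\bigl(d_0\psi_0(x)d_0^{-1}\bigr)B'(v,w)$, and using non-degeneracy of $B'$, we obtain $\psi_0'(x)=d_0\psi_0(x)d_0^{-1}$, which is the asserted relation (with $d$ there taken to be $d_0$).

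For the converse, note first that property (iii) is really the \emph{definition} of the anti-automorphism to be produced, so the input is a pair $(\psi_0,B)$ with $\psi_0$ an anti-automorphism of $\cd$ obeying the degree identity and $B$ a non-degenerate $\F$-bilinear form satisfying (i) and (ii). Fixing a homogeneous $\cd$-basis $w_1,\dots,w_n$ of $V$ with Gram matrix $\Phi=(B(w_i,w_j))$, which is invertible over $\cd$ by non-degeneracy, one defines $\psi$ on $\mcr=M_n(\cd)$ by \eqref{psimatrix}, $\psi(X)=\Phi^{-1}\psi_0(X^t)\Phi$. An entrywise computation using that $\psi_0$ is an anti-automorphism of $\cd$ shows that $X\mapsto\psi_0(X^t)$ is an anti-automorphism of $M_n(\cd)$, and conjugating by the invertible $\Phi$ preserves this, so $\psi$ is an anti-automorphism of $\mcr$. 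By the two displayed computations before the statement, $B(rw_k,w_\ell)$ and $B(w_k,\psi(r)w_\ell)$ are the $(k,\ell)$-entries of the matrices $\psi_0(R)^t\Phi$ and $\Phi\,\psi(R)$, which coincide by the definition \eqref{psimatrix} of $\psi$; expanding arbitrary $v,w$ in the basis and using (ii) yields property (iii) in general. Finally, for $\tau$-homogeneity, let $r\in\mcr_g$ and take homogeneous $v\in V_h$, $w\in V_k$; then $rv\in V_{gh}$, so by (i) $\deg B(rv,w)=\tau(g_0)\tau(gh)k=\tau(g_0)\tau(h)\tau(g)k$, whereas writing $\psi(r)=\sum_s\psi(r)_s$ the term $B(v,\psi(r)_s w)$ lies in $\cd_{\tau(g_0)\tau(h)sk}$, and these homogeneous components are pairwise distinct for distinct $s$. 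Comparing with $B(rv,w)=\sum_s B(v,\psi(r)_s w)$ forces $B(v,\psi(r)_s w)=0$ whenever $s\neq\tau(g)$, and since this holds for all homogeneous $v,w$ and $B$ is non-degenerate, $\psi(r)_s=0$ for $s\neq\tau(g)$. Hence $\psi(\mcr_g)\subseteq\mcr_{\tau(g)}$, and equality holds because $\psi$ is bijective and $\tau$ is a bijection.

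I expect the one genuinely delicate point to be this last degree bookkeeping, together with keeping straight which operations are $\cd$-linear and which are $\psi_0$-semilinear: $B$ is semilinear in its first slot, $V^{\ast}$ carries a left $\cd$-action, transposition interchanges the two sides, and $\Phi$ has non-trivially graded entries. Everything else reduces to formal verification.
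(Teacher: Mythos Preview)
Your proposal is correct and follows the same approach as the paper: existence is the content of the preceding lemmas and discussion, uniqueness is deduced from the identification $S_{1}=S_{2}=S_{3}$ established in the proof of the lemma producing $\psi_{0}$, and the converse comes from the matrix formula \eqref{psimatrix} together with the degree computation via property (i). The paper records the proposition with a \textsf{qed} precisely because all ingredients are already in place; you have written out explicitly the uniqueness and converse verifications that the paper leaves to the reader.
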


From now on, we assume that $\cd$ is finite-dimensional and $\psi$ is a $\tau$-homogeneous involution. Then, $\psi_{0}$ has to be an involution on $\cd$ and, from \Cref{lemmaTau}, $\tau$ has to be an involution on $G$. Our next step is to construct a special $\cd$-basis of $V$ to simplify the matrix $\Phi$ in \eqref{psimatrix}. The following result can be proved by following \textit{verbatim} the proof of Lemma 4.6 in \cite{FSY22}.

\begin{lemma} \label{epsilon+-1}
    Let $\psi: \mcr \to \mcr$ be a $\tau$-homogeneous involution. Then
    $$B(w,v) = \varepsilon _B \psi_0\left(B(v,w)\right), \quad \forall \, v,  w \in V,$$
    where $\varepsilon_B \in \{-1,1\} $.
\end{lemma}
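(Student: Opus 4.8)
The plan is to run the standard ``adjoint form'' argument, verbatim as in the proof of \cite[Lemma 4.6]{FSY22}: one compares $B$ with the form obtained by swapping its arguments and twisting by $\psi_0$, and exploits that $\psi$ is an involution to identify the two.

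First I would introduce the adjoint form $B'\colon V\times V\to\cd$, $B'(v,w)=\psi_0\big(B(w,v)\big)$, and check that the pair $(\psi_0,B')$ again satisfies conditions (i)--(iii) of the preceding Proposition (with $\tau(g_0)$ in place of $g_0$), so that the uniqueness statement there applies to it. Condition (ii) for $B'$ is immediate from $\psi_0$ being an anti-automorphism which is an involution (recall $\psi_0$ is an involution because $\psi$ is). Condition (iii) is the crucial one: from (iii) for $B$ and from $\psi^2=\mathrm{id}$ one first deduces $B(w,rv)=B(\psi(r)w,v)$ for all $r\in\mcr$, and then applying $\psi_0$ turns this into $B'(rv,w)=B'(v,\psi(r)w)$, so that $(\psi_0,B')$ induces the \emph{same} anti-automorphism $\psi$. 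Condition (i) is a degree computation using $\deg B(w,v)=\tau(g_0)\tau(\deg w)\deg v$, the degree formula for $\psi_0$, the fact that $\tau$ is an anti-automorphism of $G$, and $\tau^2=\mathrm{id}_G$ (\Cref{lemmaTau}).

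Next I would invoke the uniqueness part of the Proposition: there is a nonzero homogeneous $d\in\cd$ with $B'=dB$ and $\psi_0(x)=d\,\psi_0(x)\,d^{-1}$ for all $x\in\cd$; the second identity forces $d\in Z(\cd)$. Applying the operation $B\mapsto B'$ twice, one computes on one hand $(B')'(v,w)=\psi_0\big(\psi_0(B(v,w))\big)=B(v,w)$ since $\psi_0^2=\mathrm{id}$, and on the other hand $(B')'(v,w)=d\,\psi_0(d)\,B(v,w)$ using that $d$ is central. Since $B$ is non-degenerate its values span $\cd$, so $d\,\psi_0(d)=1$. Unwinding $B'=dB$ then gives $B(w,v)=d^{-1}\psi_0\big(B(v,w)\big)$, and one sets $\varepsilon_B:=d^{-1}$.

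It remains to see that $\varepsilon_B$ is a field scalar, hence $\pm1$: a further degree count, comparing condition (i) for $B'$ with the one for $B$ through $B'=dB$, forces $d$ into the identity component $\cd_1=\F$; then $\psi_0(d)=d$ and $d\,\psi_0(d)=1$ yield $d^2=1$, i.e.\ $\varepsilon_B\in\{-1,1\}$. I expect the two places requiring care to be the verification of condition (iii) in the second paragraph — keeping track of which argument $\psi$ and $\psi_0$ act on, and invoking $\psi^2=\mathrm{id}$ at exactly the right moment — and the final degree bookkeeping, which is precisely what rules out $\varepsilon_B$ being merely a central homogeneous element with $\varepsilon_B\,\psi_0(\varepsilon_B)=1$; both steps are carried out in detail in \cite[Lemma 4.6]{FSY22}.
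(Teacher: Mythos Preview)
Your proposal is correct and is exactly the approach the paper takes: the paper does not give its own proof of this lemma but states that it ``can be proved by following \textit{verbatim} the proof of Lemma~4.6 in \cite{FSY22}'', and your outline is precisely that argument (introduce the twisted form $B'$, verify it again satisfies (i)--(iii) for the same $\psi$, apply the uniqueness clause of the preceding Proposition to get $B'=dB$ with $d$ central, and then force $d\in\cd_1=\F$ with $d^2=1$).
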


As a consequence, we conclude that $B$ is a balanced bilinear form, that is, $B(u, v) = 0$ if and only if $B(v, u) = 0$. If $U \subseteq V$ is a $\cd$-subspace, we define
$$U^\perp = \{x \in V \, | \, B(U,x)=0 \} =  \{x \in V \, | \, B(x,U)=0 \}.$$

A standard verification yields that $V = U \oplus U ^\perp$ if and only if $B|_{U}$ is non-degenerate.
Then, we can construct a homogeneous $\cd$-basis $\{v_1, \ldots , v_m, v'_{m+1}, \ldots , v_s', v''_{m+1},\ldots, v''_s\}$ of $V$ such that
\begin{itemize}
    \item $B(v_i, v_i) \neq 0$, for $i = 1, \ldots , m$;
    \item $B(v'_j, v''_j)=1$, for $j>m$;
    \item $B(v,w)=0$ in the remaining cases.
\end{itemize}

For $1\leq i\leq m$, let $g_i = \deg v_i$ and $t_{i}=\deg B(v_{i},v_{i})$. For $m< j\leq s$, let $g_j' = \deg v'_j$ and $ g_j'' = \deg v_j''$. 
Then,
$$t_{i} = \deg B(v_i, v_i) =  \tau (g_0) \tau(g_{i})g_{i}.$$
Therefore,
$$t_{i} = \deg B(v_i, v_i) = \deg \psi_{0}(B(v_i, v_i)) = t_{i}g_{0}\tau(g_{0})^{-1}.$$
Moreover,
$$1 = \deg B(v'_j, v''_j) =  \tau (g_0) \tau(g_j')g_j''.$$
Thus, $g_{j}'' = \tau(g_{j}')^{-1}g_{0}^{-1}$ and
$$1 =  \deg B(v'_j, v_j'')= \deg \psi_0(B(v_j'', v'_j)) =g_0 \tau (g_0)^{-1}. $$
Hence, we obtain that always $g_{0}=\tau(g_{0})$.

    We can note that $\psi$ is orthogonal if $\varepsilon_{B}=1$ and that $\psi$ is symplectic otherwise. Moreover, $\varepsilon_{B}=-1$ implies $m=0$ in the previous notation. We summarize our discussion above.

\begin{theorem}
    Let $G$ be a group, $\tau$ be an involution on $G$ and $\F$ be an algebraically closed field of characteristic different from $2$. Let $\cd$ be a finite-dimensional graded-division algebra with support $T\subseteq G$ and let $\mathcal{R}=M_{n}(\cd)$ endowed with a $G$-grading. Then, $\mathcal{R}$ admits a $\tau$-homogeneous involution $\psi$ if and only if there exists an element $g_{0}\in G$, an involution $\psi_{0}$ on $\cd$, a sequence
    $$\gamma=(g_{1},\ldots,g_{m},g_{m+1}',\ldots,g_{s}',g_{m+1}'',\ldots,g_{s}'')$$
    of elements from $G$ and a sequence
    $$\gamma=(t_{1},\ldots,t_{m})$$
    of elements from $T$ such that $\deg(\psi_{0}(d))=\tau(g_{0})\tau(\deg d)\tau(g_{0})^{-1}$ for all $d\in \cd$ homogeneous, $t_{i}=\tau(g_{0})\tau(g_{i})g_{i}$ for all $1\leq i\leq m$, $g_{j}'' = \tau(g_{j}')^{-1}g_{0}^{-1}$ for all $j > m$ and $\mcr$ is isomorphic to $\mathcal{M}(\cd,\gamma)$. In this case, $g_0 = \tau (g_0)$. Moreover, if $\varphi\colon \mcr\to \mathcal{M}(\cd,\gamma)$ is an isomorphism and $\{X_{u}\mid u\in T\}$ is a basis of $\cd$, then $\psi'=\varphi\psi\varphi^{-1}$ is a $\tau$-homogeneous involution on $\mathcal{M}(\cd,\gamma)$ given by $\psi'(e_{ij}\otimes X) = \Phi ^{-1} e_{ji}\otimes\psi_{0}(X) \Phi$, where
    $$\Phi = \displaystyle \sum _{i=1}^m 1 \otimes X_{t_i} \oplus \left( 
    \begin{matrix}
        0 & I_s\\
        I_{s} & 0
    \end{matrix}
    \right)\otimes X_{1}, \text{ if $\psi$ is orthogonal}$$
    or
    \[
        \pushQED{\qed} 
        \Phi = \left( 
        \begin{matrix}
            0 & I_s \\
            -I_s & 0
        \end{matrix}
        \right) \otimes X_1, \text{ if $\psi$ is symplectic}.\qedhere
        \popQED
    \]     
\end{theorem}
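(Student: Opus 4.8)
The strategy is to assemble the structural facts already developed in this section into a clean biconditional, and then, for the converse direction, to explicitly build the form $B$ from the data $(g_0, \psi_0, \gamma, (t_i))$ and verify that the resulting $\psi$ given by \eqref{psimatrix} is a $\tau$-homogeneous involution. First I would handle the forward direction: assuming $\mcr$ admits a $\tau$-homogeneous involution $\psi$, I invoke the Proposition preceding this theorem to obtain $g_0 \in G$, an anti-automorphism $\psi_0$ on $\cd$ with $\deg\psi_0(d) = \tau(g_0)\tau(\deg d)\tau(g_0)^{-1}$, and a non-degenerate bilinear form $B$ satisfying (i)--(iii). Since $\psi$ is an involution, $\psi_0$ must be an involution on $\cd$ (this follows from the uniqueness clause of the Proposition applied to $\psi = \psi^2$, or directly from $B' = dB$ rigidity), and by \Cref{lemmaTau} $\tau$ is an involution on $G$. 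Then I pass to the homogeneous $\cd$-basis $\{v_1,\ldots,v_m,v'_{m+1},\ldots,v'_s,v''_{m+1},\ldots,v''_s\}$ constructed above via \Cref{epsilon+-1} and the orthogonal-decomposition argument, set $g_i = \deg v_i$, $t_i = \deg B(v_i,v_i)$, $g_j' = \deg v_j'$, $g_j'' = \deg v_j''$, and read off exactly the degree relations already derived in the displayed computations: $t_i = \tau(g_0)\tau(g_i)g_i$, $g_j'' = \tau(g_j')^{-1}g_0^{-1}$, and $g_0 = \tau(g_0)$. The matrix $\Phi$ of $B$ in this basis is precisely the block matrix displayed in the statement (diagonal blocks $X_{t_i}$ for the self-dual part, hyperbolic blocks $\left(\begin{smallmatrix} 0 & I_s \\ \pm I_s & 0\end{smallmatrix}\right)\otimes X_1$ for the dual pairs, with sign determined by $\varepsilon_B$), and the fact that $\varepsilon_B = -1$ forces $m = 0$ was already observed. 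Finally, since $\mcr = \operatorname{End}_\cd(V)$ with $V$ having this homogeneous basis, the standard identification gives a $G$-graded isomorphism $\varphi\colon \mcr \to \mathcal{M}(\cd,\gamma)$, and conjugating \eqref{psimatrix} through $\varphi$ yields the stated formula for $\psi'$.

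For the converse, I would reverse the construction: given the data, let $V = \bigoplus_{k} \cd[h_k]$ be the free graded right $\cd$-module with homogeneous generators of the degrees prescribed by $\gamma$, so that $\operatorname{End}_\cd(V) \cong \mathcal{M}(\cd,\gamma)$. Define $B\colon V\times V \to \cd$ on basis vectors by $B(v_i,v_i) = X_{t_i}$, $B(v_j',v_j'') = 1 = \varepsilon_B\,\psi_0(B(v_j'',v_j'))$ (choosing the value of $B(v_j'',v_j')$ accordingly, e.g. $1$ in the orthogonal case and $-1$ in the symplectic case), and zero on all other pairs, extending sesquilinearly via (ii). One checks that the degree relations $t_i = \tau(g_0)\tau(g_i)g_i$ and $g_j'' = \tau(g_j')^{-1}g_0^{-1}$ together with $g_0 = \tau(g_0)$ and $\deg\psi_0(d) = \tau(g_0)\tau(\deg d)\tau(g_0)^{-1}$ are exactly what is needed for $B$ to satisfy (i) and to be well-defined and $\varepsilon_B$-balanced in the sense of \Cref{epsilon+-1}; non-degeneracy is clear since $\Phi$ is invertible. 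Then the Proposition (converse part) produces a $\tau$-homogeneous anti-automorphism $\psi$ on $\mcr$, and it remains to check $\psi^2 = \operatorname{id}$: this reduces, via \eqref{psimatrix}, to the identity $\Phi^{-1}\,\psi_0^{t}(\Phi^{t})^{t}\cdots = \Phi$ — concretely, to $\psi_0(\Phi)^t = \varepsilon_B \Phi$ combined with $\psi_0^2 = \operatorname{id}$ — which is precisely the content of \Cref{epsilon+-1} re-expressed in matrix form, so it holds by the balancedness of $B$.

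The main obstacle is bookkeeping the degrees consistently across the tensor decomposition $\mcr = M_n(\F)\otimes\cd$ versus the intrinsic description $\operatorname{End}_\cd(V)$: one must be careful that the formula $\deg(e_{ij}\otimes X) = g_i(\deg X)g_j^{-1}$ defining $\mathcal{M}(\cd,\gamma)$ matches the action of $\psi$ on homogeneous components, i.e. that $\psi'(e_{ij}\otimes X)$ really lands in degree $\tau(\deg(e_{ij}\otimes X))$, and that the block form of $\Phi$ interacts correctly with $\psi_0$ being the relevant involution on $\cd$ (in particular the remark that when $\cd$ is realized as a matrix algebra and $\psi_0$ is transposition, the $\psi_0$ in \eqref{psimatrix} is absorbed into an ordinary transpose on $M_n(\F)\otimes\cd$). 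All the genuinely new analytic input — the sign $\varepsilon_B$, the vanishing $m = 0$ in the symplectic case, the self-duality $g_0 = \tau(g_0)$ — has already been extracted in the running text, so the proof is essentially an assembly argument; I would keep it short, citing the Proposition and \Cref{epsilon+-1,lemmaTau} and the displayed degree computations rather than repeating them.
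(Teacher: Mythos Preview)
Your proposal is correct and matches the paper's approach: the paper treats this theorem purely as a summary of the preceding development (it writes ``We summarize our discussion above'' and closes the statement with a \textsf{qed}), so the forward direction is exactly the assembly you describe from the Proposition, \Cref{epsilon+-1}, the special $\cd$-basis, and the displayed degree computations, while the converse is the explicit construction of $B$ from the data $(g_0,\psi_0,\gamma,(t_i))$ that you outline. Your write-up is in fact more detailed than what the paper supplies, but the underlying argument is identical.
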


\section*{Acknowledgements}
    We thank F. Y. Yasumura for suggesting the topic, for the insightful discussions and for carefully reading and providing feedback that improved this paper.

\bibliographystyle{abbrv}

\bibliography{Refs.bib}

\end{document}